\begin{document}

\theoremstyle{plain}

\newtheorem{prop}{Proposition}
\newtheorem{theorem}{Theorem}
\newtheorem{corollary}{Corollary}
\newtheorem{lemma}{Lemma}

\theoremstyle{definition}
\newtheorem{definition}{Definition}
\newtheorem{remark}{Remark}
\newtheorem{remarks}{Remarks}
\newtheorem{example}{Example}
\newtheorem{examples}{Examples}

\newcommand{\rr}{\mathbb R}
\newcommand{\cc}{\mathbb C}
\newcommand{\nn}{\mathbb N}
\newcommand{\zz}{\mathbb Z}
\renewcommand{\ss}{\mathbb S}
\newcommand{\ts}{\thinspace}
\newcommand{\sse}{\subseteq}

\newcommand{\al}{\alpha}
\newcommand{\be}{\beta}
\newcommand{\ga}{\gamma}
\newcommand{\la}{\lambda}
\newcommand{\all}{\frac{\alpha}{|\alpha|}}
\newcommand{\ep}{\epsilon}
\newcommand{\alo}{\alpha_1}
\newcommand{\ai}{\alpha_i}
\newcommand{\ak}{\alpha_k}
\newcommand{\an}{\alpha_n}
\newcommand{\supp}{{\rm supp}}

\newcommand{\sig}{\sigma}
\newcommand{\D}{\Delta}
\newcommand{\Dn}{\Delta_n}
\newcommand{\pa}{\parallel}
\newcommand{\ta}{\tilde a}
\newcommand{\puis}{\rr((x^{\frac{i}{n}}))}
\newcommand{\cpuis}{\cc((x^{\frac{i}{n}}))}

\title{Polynomials non-negative on strips and half-strips}

\author{Ha Nguyen \footnote{
Department of Mathematics,
Wesleyan College,
Macon, GA 31210. Email:\,hnguyen@wesleyancollege.edu .}
\and
Victoria Powers \footnote{
Department of Mathematics and Computer Science,
Emory University,
Atlanta, GA 30322. Email:\, vicki@mathcs.emory.edu.} }

\maketitle

\section{Introduction}

Throughout, we work in the real polynomial ring in two variables, which we
denote by $\rr[x,y]$.  The set of sums of squares in $\rr[x,y]$ is denoted
by $\sum \rr[x,y]^2$.  
Recently, M.~Marshall \cite{MM_strip} settled a long-standing open problem by
proving the following:

\begin{theorem} \label{mar} Suppose $f(x,y) \in \rr[x,y]$ is non-negative
on the strip $[0,1] \times \rr$.  Then there exist $\sigma(x,y), \tau(x,y)
\in \sum \rr[x,y]^2$ such that $$f(x,y) = \sigma(x,y) + \tau(x,y) (x - x^2).$$
\end{theorem}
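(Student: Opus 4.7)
The natural setup is to write $f(x,y) = \sum_{j=0}^{2d} a_j(x)\, y^j$ as a polynomial in $y$ with coefficients in $\rr[x]$, and induct on the $y$-degree $2d$. The base case $d=0$ reduces to the classical one-variable theorem (Hausdorff / P\'olya--Szeg\H{o}) that any $a(x)\in\rr[x]$ non-negative on $[0,1]$ admits a representation $s_0(x) + s_1(x)(x-x^2)$ with $s_0,s_1\in\sum\rr[x]^2$. The leading coefficient $a_{2d}(x)$ must itself be non-negative on $[0,1]$ (else $f$ is unbounded below on a vertical line $\{x_0\}\times\rr$), so the base case already handles it.

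For the inductive step I would factor $f$ as a polynomial in $y$ over the algebraic closure of the field of Puiseux series $\cpuis$, writing $f(x,y) = a_{2d}(x)\prod_i (y-\alpha_i(x))$ with $\alpha_i\in\overline{\cpuis}$ (this is apparently where the author's notation $\puis$, $\cpuis$ enters). Because $f$ has real coefficients, the $\alpha_i$ split into two kinds: complex-conjugate pairs, each giving a real quadratic factor $(y-\alpha)(y-\bar\alpha)$ that is strictly positive in $y$ as a Puiseux polynomial; and real ones, $\alpha\in\puis$. Specialising to $x=x_0$ for generic $x_0\in(0,1)$, the polynomial $f(x_0,y)\in\rr[y]$ is non-negative on $\rr$, hence each of its real roots has even multiplicity; this forces each real Puiseux root of $f$ to occur with even multiplicity also. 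At the level of Puiseux coefficients, $f$ is therefore visibly a sum of two squares in $y$ over $\puis$.

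The main obstacle I expect is converting this analytic factorization into an \emph{algebraic} identity with honestly polynomial coefficients in $\rr[x,y]$, and matching it to the preorder generated by $x-x^2$. The Puiseux coefficients live in $\rr((x^{1/n}))$ and can blow up as $x\to 0$ or $x\to 1$. My plan would be to work locally at $x=0$ and at $x=1$ separately, extract sum-of-squares representations modulo suitable powers of $x$ (respectively of $1-x$), and then glue them together; the error in such a gluing vanishes at both endpoints and should be absorbable into a term divisible by $x-x^2=x(1-x)$ times a further sum of squares. Actually producing an element of $\sum\rr[x,y]^2 + (x-x^2)\sum\rr[x,y]^2$, while preserving the inductive bookkeeping on the $y$-degree, is the delicate step --- I expect it to require either an auxiliary Positivstellensatz-style argument on the compact interval $[0,1]$ or a careful Pythagoras-number computation over $\rr(x)$ to show the two local patches agree up to something in the preorder.
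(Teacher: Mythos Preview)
First, a clarification: Theorem~\ref{mar} is not proven in this paper --- it is Marshall's result, cited from \cite{MM_strip}. The paper's contribution is Theorem~\ref{striptheorem}, a generalization to $U\times\rr$ for arbitrary compact $U\subseteq\rr$, whose proof follows Marshall's original strategy closely. So the relevant comparison is between your sketch and Marshall's method as rehearsed here in \S2. (The macros \verb|\puis|, \verb|\cpuis| that you spotted in the preamble are never used in the body; they do not signal a Puiseux-series argument.)

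Marshall's approach, and the paper's, is not by induction on the $y$-degree and does not factor over Puiseux series. Instead one (i) reduces to $f$ with strictly positive leading coefficient and finitely many zeros in the strip; (ii) subtracts $\epsilon(x)(1+y^2)^d$ for a carefully chosen $\epsilon(x)\in\rr[x]$ to create uniform ``room''; (iii) obtains, at each $r\in[0,1]$, a \emph{local analytic} representation $f_1=\sum g_l^2+\sum h_l^2\,(x-x^2)$ with coefficients analytic in $x$ near $r$ (this is \cite[Lemma~4.4]{MM_strip}); (iv) patches these via a partition of unity on $[0,1]$; and (v) replaces the non-polynomial coefficients by polynomials using a Weierstrass-type approximation (Proposition~\ref{Prop9}), the error being absorbed by the $\epsilon(x)(1+y^2)^d$ cushion.

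Your proposal has a genuine gap at the step you yourself flag as the ``main obstacle.'' A Puiseux expansion of the roots of $f(x,y)$ is intrinsically local at a single point (say $x=0$); over $(0,1)$ the roots are algebraic functions which can branch and recombine, so a single Puiseux factorization does not give a representation valid on all of $[0,1]$. Your plan to work at $x=0$ and $x=1$ and ``glue'' ignores what happens in the interior, and the gluing itself --- passing from formal series to polynomials while landing in $\sum\rr[x,y]^2+(x-x^2)\sum\rr[x,y]^2$ --- is exactly where all the content lies. Marshall's proof supplies the missing mechanism: the $\epsilon(x)(1+y^2)^d$ buffer together with Weierstrass approximation is what makes the local-to-global step go through, and nothing in your outline substitutes for it. Separately, your induction on $\deg_y f$ is announced but never used: after factoring over Puiseux series you have no reduction to smaller $y$-degree, so the inductive hypothesis does no work.
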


An expression $f = \sig + \tau (x - x^2)$ is an immediate witness to the
positivity condition on $f$.  In general, one wants to characterize polynomials
$f$ 
which are positive, or non-negative,  on a semialgebraic set $K \sse \rr^n$
in terms of sums of squares and the polynomials used to define $K$.  
Representation theorems of this type have a long and illustrious history, going
back at least to Hilbert.  There has been much interest in these questions
in the last decade, in a large part because of applications outside of
real algebraic geometry, notably in problems of optimizing polynomial
functions on semialgebraic sets.
In this paper we look at some generalizations of Marshall's theorem.  Our results
give many new examples of non-compact semialgebraic sets in $\rr^2$ for
which one can characterize all polynomials which are non-negative on the set.

Let $\rr[X] $ denote
$\rr[x_1, \dots , x_n]$, the real polynomial ring in $n$ variables,
and write $\sum \rr[X]^2$ for the sums of squares in $\rr[X]$.
Given a finite set $S = \{ s_1, \dots , s_k \} \sse \rr[X]$ 
the {\it basic closed semialgebraic set in $\rr^n$ generated by $S$}, denoted
$K_S$, is
$\{ a \in \rr^n \mid s_i(a) \geq 0$ for $i = 1, \dots , k  \}$.  
Note that the
strip $[0,1] \times \rr$ is the basic closed semialgebraic set in $\rr^2$
generated by $\{ x - x^2 \}$. 

There are two
algebraic objects associated to the semialgebraic set $K_S$:  The {\it quadratic module generated by
$S$}, denoted $M_S$,  is the set of all elements of $\rr[X]$ which can be written
$\sig_0 + \sig_1 s_1  + \dots + \sig_k s_k$, where each $\sig_i \in \sum \rr[X]^2$.
The {\it preordering } generated by $S$, denoted $T_S$,  consists
of all elements of the form $\sum_{e \in \{0,1\}^k} \sigma_e s^e$, where
$s^e$ denotes $s_1^{e_1} \dots s_s^{e_s}$ for $e = (e_1, \dots , e_s)$, and
each $\sigma_e \in \sum \rr[X]^2$.   In general, $M_S \subsetneqq T_S$, although
if $|S| = 1$, then clearly $T_S = M_S$. Also, $T_S = M_S$ iff $M_S$ is closed
under multiplication iff $s_i \cdot s_j \in M_S$ for all $i,j$.

We recall briefly what is known about the existence of representations in $T_S$ or $M_S$
for polynomials positive or non-negative on $K_S$.    If $K_S$ is compact, then Schm\"udgen's Theorem \cite{Schm} says
that every $f$ which is strictly positive
on $K_S$ is in $T_S$, regardless of the choice of generators $S$.  However, in general,
one cannot replace $f > 0$ on $K_S$ by $f \geq 0$  on $K_S$, or replace $T_S$ by
$M_S$.  If $K_S$ is not compact and $\dim(K_S) \geq 3$  then by \cite[Prop. 6.1]{S_99},
there always exist polynomials $f$ which are positive on $K_S$, but not in $T_S$, regardless
of the choice of generators $S$.   The same is true if $\dim(K_S) = 2$ and
$K_S$ contains an open cone, by \cite[Prop. 3.7]{PS} .  
By \cite[Thm. 2.2]{KM_MP}, if  $K_S \sse \rr$ and is not compact, then $T_S$ contains every $f$ which is
non-negative on $K_S$,  provided one chooses the right set of generators $S$.  If $K_S \sse \rr$ and
is compact, then $M_S$  contains all polynomials non-negative on $K_S$, again provided
one chooses the right set of generators.  
(We give an elementary proof of this in \S 2.)

We say that $M_S$ (respectively, $T_S$) is {\it saturated} if for every $f \in \rr[X]$,
$f$ non-negative on $K_S$ implies $f \in M_S$ (respectively, in $T_S$).  
Marshall's Theorem says that the
 the quadratic module in $\rr^2$ generated by $x - x^2$   is saturated.
This was only the second example given of  a finitely generated saturated
preordering in the non-compact case (the first being 
the preordering generated by $x, 1-x$ and $ 1 - xy$ given in  \cite[Rem. 3.14]{S}),
and settled a long-standing open problem.

Our aim in this paper is to give families of examples related to Marshall's
theorem.   In the next section, we generalize Marshall's result to the case
$U \times \rr$, where $U$ is any compact set in $\rr$, more precisely, we show
that if  $S \sse \rr[x]$ is the ``obvious" set of generators for $U$, then the quadratic
module in $\rr[x,y]$ generated by $S$ is saturated.   In \S 3, we look at some non-compact
subsets of
a strip $[a,b] \times R$ which are bounded as $y \rightarrow -\infty$; we refer to
such a set as a half-strip in $\rr^2$.  We give a representation theorem for a half-strip 
of the form $(U \times \rr) \cap \{ y \geq q(x) \}$, where $U \sse \rr$ is compact
and $q(x) \geq 0$ on $U$.  We give other examples of half-strips for which the
corresponding preordering is saturated, as well as a family of negative examples.

The authors are grateful to Bruce Reznick, and especially Murray Marshall, for helpful
discussions concerning the work in this paper.

\section{Polynomials non-negative on  strips in the plane}

In this section, we give representation theorems for non-compact basic closed  semialgebraic sets 
which are contained in a subset of $\rr^2$ of the form $[a,b] \times \rr$ and are
unbounded as $y \rightarrow \pm \infty$.  We refer to such a set as a {\it strip} in the plane.
We start with a representation theorem for strips of 
the form $U \times \rr$, where $U \sse \rr$ is compact.  More precisely, we show that
the quadratic module  corresponding to $U \times \rr$ is saturated, as long as we choose the
right set of generators.   We end this section with a few remarks about the
more general case of $U \times W$, where $W \sse \rr$ is a non-compact basic closed
semialgebraic set. 

For the rest of this section, fix $U \sse \rr$ compact, say $U = [a_1,b_1] \cup \dots \cup [a_k,b_k]$, where $a_1 \leq b_1 < a_2 \leq b_2 < \dots <a_k \leq b_k$.   Define $S \sse \rr[x]$ by
$$S = \{x - a_1, (x - a_2)(x - b_1), \dots, (x - a_k)(x - b_{k-1}), b_k - x \}.$$  Then the basic closed
semialgebraic set generated by $S$ in $\rr$ (respectively in $\rr^2$) is $U$ (respectively $U \times \rr$).  By analogy with the non-compact case
in $\rr$ (see \cite{KM_MP}), we call $S$ the {\it natural choice of generators} for $U$.  

\begin{lemma} \label{dim1} Suppose $U \sse \rr$ is compact and $S \sse \rr[x]$ is the natural choice
of generators.  Then in  $\rr[x]$, $T_S = M_S$.  It follows immediately that the same is
true in $\rr[x,y]$.
\end{lemma}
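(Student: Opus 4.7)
The plan is to observe that $M_S \sse T_S$ always, so $T_S = M_S$ reduces to the reverse inclusion, which in turn reduces to showing that $g_i g_j \in M_S$ for every pair $i \neq j$ of generator indices. Once these pair-products lie in $M_S$, an induction on the number of factors extends the conclusion to every squarefree monomial $g^e = g_0^{e_0} \cdots g_k^{e_k}$: peeling off one generator factor $g_m$ and using the inductive representation $g^e/g_m = \sigma_0 + \sum_l \sigma_l g_l$ gives $g^e = \sigma_0 g_m + \sum_l \sigma_l (g_l g_m)$, which lies in $M_S$ since each $g_l g_m$ is a pair-product in $M_S$ and $M_S$ is closed under SOS-multiplication and sums. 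Hence $T_S \sse M_S$.

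For the pair $(0,k)$, the single-interval identity
\[
(x - a)(b - x) = \frac{1}{b - a}\bigl[(b - x)^2 (x - a) + (x - a)^2 (b - x)\bigr],
\]
applied with $a = a_1$ and $b = b_k$, immediately expresses $g_0 g_k$ as $\frac{1}{b_k - a_1}\bigl[g_k^2 \cdot g_0 + g_0^2 \cdot g_k\bigr]$, placing it in $M_S$. For every other pair $g_i g_j$---which involves at least one middle generator $g_l = (x - b_l)(x - a_{l+1})$---I plan to invoke the one-dimensional compact $M_S$-saturation theorem (\cite[Thm.\ 2.2]{KM_MP}; an elementary proof is supplied elsewhere in \S 2). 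Since each $g_l \in S$ is non-negative on $U$ by construction of the natural generators, the product $g_i g_j$ is non-negative on $U$ and therefore lies in $M_S$ by saturation.

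The extension to $\rr[x,y]$ is then automatic: any representation of $g_i g_j$ in $M_S$ valid in $\rr[x]$ with $\sigma_l \in \sum \rr[x]^2$ is simultaneously a valid $M_S$-representation over $\rr[x,y]$, via the inclusion $\sum \rr[x]^2 \sse \sum \rr[x,y]^2$. The main obstacle lies in the appeal to 1D compact saturation for products such as $g_0 g_i = (x - a_1)(x - b_i)(x - a_{i+1})$: a self-contained identity-based proof appears considerably more delicate, since iterating the single-interval identity on sub-intervals of $[a_1, b_k]$ produces factors like $(b_l - x)$ that are negative on part of $U$ and hence do not themselves lie in $M_S$. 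Absorbing these ``bad'' linear factors by combining several such identities is precisely the work that the saturation theorem quietly performs for us.
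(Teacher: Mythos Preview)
Your reduction to pairwise products $g_ig_j\in M_S$ and the induction to arbitrary squarefree products are correct and match the paper's strategy. The identity you give for $g_0g_k=(x-a_1)(b_k-x)$ is exactly the one the paper uses.

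The genuine gap is your treatment of the remaining pairs. You invoke ``the one-dimensional compact $M_S$-saturation theorem,'' pointing to \cite[Thm.~2.2]{KM_MP} and to ``an elementary proof supplied elsewhere in \S2.'' Neither reference does the work you need. Theorem~2.2 of \cite{KM_MP} concerns the \emph{non-compact} case and only yields $T_S$-saturation. The elementary proof in \S2 of this paper is Proposition~\ref{dim1case}, whose very first line reads ``By Lemma~\ref{dim1}, it is enough to prove that $T$ is saturated.'' So the $M_S$-saturation you are citing is \emph{deduced from} Lemma~\ref{dim1}; appealing to it here is circular.

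The paper avoids this by working over the single interval $[a_1,b_k]$ rather than over $U$. Saturation of the quadratic module generated by $\{x-a_1,\,b_k-x\}$ is an independently known, elementary fact (cited as \cite[Cor.~11]{PR1}). The paper does \emph{not} apply it directly to $s_is_j$, because $s_is_j$ is generally negative on the gaps of $U$; instead it subtracts the $M_S$-element $\tfrac1m(s_j^2 s_i+s_i^2 s_j)$ so that the difference is non-negative on all of $[a_1,b_k]$, and then single-interval saturation places that difference in $M_S$. This extra algebraic step is precisely what your appeal to full $M_S$-saturation was hiding, and it is not recoverable from your argument as written.
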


\begin{proof}  Let $U$ be as above  and, for ease of exposition, set
$s_i = (x - a_{i+1})(x - b_i)$ for $i = 1, \dots k-1$.
The identities
$$(x - a_1)(b_k - x) = \frac{1}{b_k - a_1} \left( (b_k - x)^2 (x - a_1) + (x - a_1)^2 (b_k - x) \right),$$
$$x - a_1 = \frac{1}{b_k - a_1} \left( (x-a_1)^2 + (x - a_1)(b_k - x)\right), \text{ and }$$
$$b_k - x = \frac{1}{b_k - a_1} \left( (b_k - x)^2 +  (x - a_1)(b_k - x)\right)$$
show that the quadratic module generated by $\{ (x - a_1)(b_k - x), s_1, \dots , s_{k-1} \}$ is the
same as $M_S$.  Thus 
to prove $T_S = M_S$, it is enough to prove that $s_i s_j \in M_S$ for $1 \leq i < j \leq k-1$
and $(x - a_1)(b_k - x) s_i \in M_S$ for $1 \leq i \leq k-1$.

 Suppose $1 \leq i < j \leq k-1$, then it is easy to check that
$s_i + s_j \geq 0$ on $[a_1,b_k]$.  Let $m$ be the maximum of $s_i + s_j$ on $[a_1,b_k]$,
then $s_i s_j \geq \frac{1}{m} (s_i + s_j)(s_i s_j)$ on $[a_1,b_k]$ and so
$$f:= s_i s_j - \frac{1}{m} \left( s_j^2 s_i + s_i^2 s_j\right) \geq 0 \text{ on } [a_1,b_k].$$
It is well-known that the quadratic module generated by
$\{ x- a_1, b_k - x \}$ is saturated (see, e.g., \cite[Cor. 11]{PR1}) and  hence $f \in M_S$.
Then $s_i s_j = f + \frac{1}{m} ( s_j^2 s_i + s_i^2 s_j) \in M_S$.

Finally, for $1 \leq i \leq k-1$, it's easy to check that $(x - a_1)(b_k - x) + s_i \geq 0$ on $[a_1,b_k]$ and
an argument similar to the previous argument shows that $ (x - a_1)(b_k - x) s_i \in M_S$.  
\end{proof}

Our goal in this section is to prove the following:

\begin{theorem} \label{striptheorem} Let $U$ and $S$ be as above and  $M$ the quadratic
module  in $\rr[x,y]$ generated by
$S$.  Then $M$ is saturated.  In other words, if $f(x,y) \in \rr[x,y]$ is non-negative on $U \times \rr$, then $f \in M$.
\end{theorem}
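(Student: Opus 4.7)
The strategy is to reduce Theorem \ref{striptheorem} to Marshall's Theorem (Theorem \ref{mar}) applied to the single strip $[a_1, b_k] \times \rr$. The ``gap generators'' $s_i = (x-a_{i+1})(x-b_i) \in S$ will be used to absorb any negativity of $f$ on the gaps $(b_i, a_{i+1})$ between successive intervals of $U$.

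\emph{Base case ($k=1$).} Here $U = [a_1, b_1]$ and $S = \{x - a_1, b_1 - x\}$. The affine change of variables $x \mapsto (x-a_1)/(b_1-a_1)$ reduces Marshall's Theorem to the statement $f = \sigma + \tau(x-a_1)(b_1-x)$ with $\sigma, \tau \in \sum \rr[x,y]^2$, and the first identity in the proof of Lemma \ref{dim1} shows $\tau(x-a_1)(b_1-x) \in M_S$, so $f \in M_S$.

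\emph{Main step ($k \geq 2$).} The core task is to produce SOS polynomials $\sigma_1, \ldots, \sigma_{k-1} \in \sum \rr[x,y]^2$ such that
$$ g \; := \; f - \sum_{i=1}^{k-1} \sigma_i s_i \;\geq\; 0 \text{ on } [a_1, b_k] \times \rr. $$
The sign structure of $s_i$ makes this geometrically natural: $s_i \leq 0$ on the $i$-th gap (where $f$ may be negative), so $-\sigma_i s_i \geq 0$ there and helps absorb negativity; conversely, $s_i \geq 0$ on $U$ (where $f \geq 0$), so $-\sigma_i s_i \leq 0$ merely reduces an already non-negative quantity. Once such $\sigma_i$ are in hand, the base case applied to $g$ yields $g = \sigma_0 + \tau(x-a_1)(b_k-x) \in M_S$, and therefore
$$ f = \sigma_0 + \tau(x-a_1)(b_k - x) + \sum_{i=1}^{k-1} \sigma_i s_i \in M_S. $$

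\emph{Main obstacle.} The heart of the argument is the construction of the multipliers $\sigma_i$. They must satisfy two competing requirements: on the $i$-th gap, $\sigma_i |s_i|$ must dominate $-f$ (plus small corrections from the other $\sigma_j s_j$, $j \neq i$), while on $U$ the total $\sum \sigma_i s_i$ must be bounded above by $f$. The delicate region is near the boundary lines $\{b_i\} \times \rr$ and $\{a_{i+1}\} \times \rr$ of each gap, where $s_i$ vanishes to first order while $f$ itself may vanish (since these lines lie in $U \times \rr$); there $\sigma_i$ must match the local behavior of $f$ precisely. One plausible way to control this is to induct on $k$, treating a single gap at a time, and to apply Marshall's Theorem to $f$ on each individual strip $[a_i, b_i] \times \rr$ to extract an SOS-in-$y$ description of the boundary values $f(a_i, y)$ and $f(b_i, y)$; these can then feed into an explicit construction of the $\sigma_i$.
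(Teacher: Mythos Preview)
Your proposal is a strategy outline, not a proof: the entire argument hinges on the construction of the SOS multipliers $\sigma_i$, and you explicitly leave this as a ``plausible'' idea without carrying it out. That gap is not cosmetic; it is the whole theorem. Two concrete obstructions make the construction genuinely hard. First, at a boundary zero $(b_i,y_0)$ of $f$ (i.e.\ $f(b_i,y_0)=0$), a first-order analysis of the inequality $0\le g=f-\sum_j\sigma_j s_j$ on both sides of $x=b_i$ forces $\sigma_j(b_i,y_0)=0$ for every $j\neq i$ (since $s_j(b_i)>0$) and pins $\sigma_i(b_i,y_0)$ to the exact value $-\partial_x f(b_i,y_0)/s_i'(b_i)$; so the $\sigma_i$ must interpolate prescribed data along each boundary line, not merely be ``large enough'' on the gaps. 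Second, on a gap where the leading $y$-coefficient $a_{2d}(x)$ happens to be negative, $f(x,y)\to-\infty$ as $|y|\to\infty$, so $\sigma_i$ must have $y$-degree at least $2d$ with a leading coefficient dominating $|a_{2d}(x)|/|s_i(x)|$ there, while simultaneously satisfying $\sum_j\sigma_j s_j\le f$ on $U$ at the same $y$-degree. Finally, even if you produce \emph{nonnegative} polynomial multipliers meeting all these constraints, you still need them to be sums of squares in $\rr[x,y]$, which is not automatic.

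The paper does not attempt this reduction at all. Instead it reruns Marshall's machinery directly over $U$: reduce to $a_{2d}>0$ on $U$ and finitely many zeros (Lemmas~\ref{Lem4}--\ref{Lem6}), subtract $\ep(x)(1+y^2)^d$ (Lemma~\ref{Lem7}), obtain local analytic representations at every point of $U$ in terms of the generators $s_i$ (Lemma~\ref{Lem8}, with one extra trick at the interior endpoints $b_i,a_{i+1}$ to trade $b_i-x$ for $s_i$), glue via a partition of unity, and then polynomially approximate the coefficients using the Weierstrass-type Proposition~\ref{Prop9}, with the $\ep(x)(1+y^2)^d$ cushion absorbing the approximation error. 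In effect, the delicate boundary matching that your approach would have to do by hand is exactly what the local analytic lemma plus approximation accomplish; if you tried to push your plan through, you would most likely end up reconstructing these same ingredients rather than genuinely reducing to the single-strip case.
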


We begin with a proof for the case where $f$ is a polynomial in $x$ only.  

\begin{prop}  \label{dim1case} Suppose $U \sse \rr$ is compact with $S$ the natural choice of generators.  Then the quadratic module in $\rr[x]$ generated by $S$  is saturated.  
\end{prop}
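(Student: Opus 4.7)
The strategy is to factor $f$ over $\rr$ and exploit Lemma \ref{dim1}, which gives $M_S = T_S$ and so makes $M_S$ closed under multiplication. Write $f = c \cdot q(x)^2 \cdot p(x) \cdot r(x)$, where $c \in \rr \setminus \{0\}$, $q^2$ collects doubled real linear factors, $p$ is a product of irreducible quadratics $(x - \al)^2 + \be^2$ from conjugate complex roots, and $r$ is the product of the distinct simple real linear factors. Since $q^2 p$ is a sum of squares, it suffices to show $c \cdot r \in M_S$.

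Because $r$ is squarefree and $r \geq 0$ on $U$, no real root of $r$ can lie in the open interior of any component $(a_j, b_j)$. Thus every real root $\rho$ of $r$ falls into one of the ``gap'' intervals $G_0 = (-\infty, a_1]$, $G_\tau = [b_\tau, a_{\tau+1}]$ for $1 \leq \tau \leq k-1$, or $G_k = [b_k, \infty)$. For $\rho \in G_\tau$, the factor $(x - \rho)$ has sign pattern $(-)^\tau (+)^{k-\tau}$ on the $k$ components of $U$. Letting $n_\tau$ be the number of roots of $r$ in $G_\tau$, the hypothesis $cr \geq 0$ on $U$ forces each $n_\tau$ with $1 \leq \tau \leq k-1$ to be even, and forces $\operatorname{sign}(c) = (-1)^{n_k}$.

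With this parity in hand, I treat each group of factors separately. A root $\rho \in G_0$ contributes $(x - \rho) = (x - a_1) + (a_1 - \rho) \in M_{\{x-a_1\}}$. If $n_k$ is odd, absorb the negative sign of $c$ into one $\rho \in G_k$ to form $(\rho - x) = (b_k - x) + (\rho - b_k) \in M_{\{b_k - x\}}$. The remaining $G_k$-roots pair up, and each pair $(x - \rho_1)(x - \rho_2)$ with $\rho_1, \rho_2 \geq b_k$ is non-negative on $[a_1, b_k]$, hence in $M_{\{x - a_1, b_k - x\}}$ by the classical single-interval saturation result cited in the proof of Lemma \ref{dim1}. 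Roots in each middle gap $G_\tau$ ($1 \leq \tau \leq k-1$) also pair up, and everything reduces to the claim below.

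The main technical obstacle is showing that for any $\rho_1, \rho_2 \in [b_\tau, a_{\tau+1}]$, the product $(x - \rho_1)(x - \rho_2)$ lies in $M_{\{g_\tau\}}$. I would prove this by seeking a constant $\la \in [0,1]$ such that $(x - \rho_1)(x - \rho_2) - \la \cdot g_\tau \geq 0$ on all of $\rr$; being a non-negative univariate polynomial, the difference is then a sum of squares, giving the representation. Under the substitution $y = x - b_\tau$, $L = a_{\tau+1} - b_\tau$, $s = \rho_1 - b_\tau$, $t = \rho_2 - b_\tau$, the condition reduces to finding $\la \in [0,1]$ satisfying
\[
L^2 \la^2 - 2 \bigl( L(s + t) - 2 s t \bigr) \la + (s - t)^2 \leq 0.
\]
A short discriminant check verifies that this quadratic in $\la$ has real roots for all $(s, t) \in [0, L]^2$ and that its smaller root lies in $[0, 1]$; that choice of $\la$ works. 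Once every factor (or paired product) of $cr$ has been placed in $M_S$, the multiplicative closure of $M_S = T_S$ gives $cr \in M_S$, completing the proof.
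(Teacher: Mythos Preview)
Your proof is correct. The overall strategy matches the paper's --- reduce to the preordering via Lemma~\ref{dim1}, factor $f$ over $\rr$, and place the linear factors in $T_S$ --- but the execution differs in two respects. First, the paper runs an induction on $\deg f$, peeling off one linear factor (when $r\le a_1$ or $r\ge b_k$) or a pair of factors at a time, whereas you pass to the squarefree part $r$ and do a global parity count on the gaps $G_\tau$ to pair up all the roots simultaneously. Second, and more substantively, for the key step of showing $(x-\rho_1)(x-\rho_2)\in M_{\{(x-b_\tau)(x-a_{\tau+1})\}}$ when $\rho_1,\rho_2\in[b_\tau,a_{\tau+1}]$, the paper simply cites \cite[Lemma~4]{BCJ}, while you give a direct, self-contained argument by exhibiting $\la\in[0,1]$ for which $(x-\rho_1)(x-\rho_2)-\la\,(x-b_\tau)(x-a_{\tau+1})$ is a perfect square in $\rr[x]$. (Your discriminant check is correct: one computes $Q(0)=(s-t)^2\ge 0$, $Q(1)=(L-s-t)^2\ge 0$, the vertex of $Q$ lies in $[0,1]$, and the discriminant of $Q$ equals $16\,st(L-s)(L-t)\ge 0$, so the smaller root $\la_-$ of $Q$ lies in $[0,1]$.) Your route is thus more elementary, at the cost of a short computation. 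One small slip: you write ``$r\ge 0$ on $U$'', but what you actually know is $c\cdot r\ge 0$ on $U$; since $c$ is a nonzero constant this still forces $r$ to have constant sign on each component, so the conclusion that no simple root of $r$ lies in an open interior $(a_j,b_j)$ is unaffected.
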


\begin{proof}  Let $T$ be the preordering in $\rr[x]$ generated by $S$.   By Lemma \ref{dim1},
it is enough to prove that  $T$ is saturated.  We note that the proof of this is similar to the
proof of \cite[Thm. 2.2]{KM_MP}.    

Suppose $f \in \rr[x]$ and $f \geq 0$ on $U$, then
 we can factor $f$ in $\rr[x]$ into psd quadratics and linear polynomials.  Since psd implies sos in $\rr[x]$, it
is enough to prove the proposition for $f$ a product of linear polynomials.  We proceed by induction on $d = \deg f$.  The
$d = 0$ case is trivial.  So suppose $x - r$ is a factor of $f$ and write $f = (x - r) f_1$.  If $r \leq a_1$, then
$x - r \geq 0$ on $U$, hence $f_1 \geq 0$ on $U$ as well.  Then $f = [(x - a_1) + (a_1 - r)] f_1$ and we are done
since $(x-a_1) + (a_1 - r) \in T$, and $f \in T$ by induction.  The case of $r \geq b_k$ is similar.

Now suppose $a_i \leq r \leq b_{i+1}$ for some $i$.  Since $f$ changes sign at $r$, there must be another
root $s$ of $f$ with $a_i \leq s \leq b_{i+1}$.  Then $f = (x - r)(x - s) f_1$ with $f_1 \geq 0$ on $U$,
and, by \cite[Lemma 4]{BCJ}, $(x -r)(x - s)$ is in the preordering generated by $(x - b_i)(x - a_{i+1})$ and
hence in $T$.  Since $f_1 \in T$ by induction, we have $f \in T$ in this case as well.
\end{proof}

It follows immediately that Theorem \ref{striptheorem} is true
if $f$ is a polynomial in $x$ only.  So suppose we have $f \in \rr[x,y]$ such that
$f \geq 0$ on $U \times \rr$ and $\deg_y f \geq 1$.  Since $f$ is positive as $|y| \rightarrow \pm \infty$, it
follows that $f$ has even degree as a polynomial in $y$ and that the leading coefficient of 
$f$ as a polynomial in $y$ is non-negative on $U$.

Next we show that it is enough to prove Theorem \ref{striptheorem} for the
case where the leading coefficient of $f$ (as a polynomial in $y$) is positive on $U$.   The proof
is a straightforward
generalization of the proof of \cite[Lemma 2.1]{MM_strip}.

\begin{lemma}\label{Lem4} It is enough to prove Theorem \ref{striptheorem}
for $f \in \rr[x,y]$ such that the leading coefficient of $f$ as a
polynomial in $y$ is strictly positive on $U$.
\end{lemma}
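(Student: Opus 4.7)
Write $f(x,y)=\sum_{i=0}^{2d} a_i(x)y^i$. As observed in the paragraph preceding the lemma, $2d$ is even and $a_{2d}(x)\ge 0$ on $U$; if $a_{2d}>0$ on all of $U$ there is nothing to show, so assume the zero set $Z=\{u\in U:a_{2d}(u)=0\}$ is nonempty.

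The plan is to produce an element $h(x,y)\in M_S$ such that $g:=f-h$ satisfies (i) $g\ge 0$ on $U\times\rr$, (ii) $\deg_y g = 2d$, and (iii) the leading coefficient of $g$ as a polynomial in $y$ is strictly positive on $U$. Granting such a decomposition, the hypothesis of the lemma, namely that Theorem \ref{striptheorem} is known in the strictly-positive-leading-coefficient case, applies to $g$, yielding $g\in M_S$, and therefore $f=g+h\in M_S$.

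To construct $h$, I would first apply Proposition \ref{dim1case} to the univariate polynomial $a_{2d}(x)$, obtaining a representation
$$a_{2d}(x)=\sigma_0(x)+\sum_{j=1}^{k}\sigma_j(x)\,t_j(x),$$
where each $\sigma_j\in\sum\rr[x]^2$ and $t_1,\dots,t_k$ are the generators in $S$. The naive candidate $h_{0}(x,y):=\sum_j \sigma_j(x)t_j(x)\,y^{2d}$ lies in $M_S$ and leaves $f-h_{0}$ with leading coefficient $\sigma_0(x)$ in $y$; but since $h_{0}\ge 0$ on $U\times\rr$, subtracting it can destroy non-negativity of the remainder on the strip. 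The refinement, following the argument of \cite[Lemma 2.1]{MM_strip}, is to replace each $y^{2d}$ factor by a more carefully chosen polynomial $R_j(x,y)$ of degree $2d$ in $y$ (still with positive leading coefficient, but with lower-order perturbations) so that (a) $\sigma_j(x)t_j(x)R_j(x,y)\le f(x,y)$ on $U\times\rr$, and (b) the leading coefficient of $g=f-h$ becomes strictly positive on all of $U$ after each zero of $a_{2d}$ on $U$ has been treated by the summand indexed by a $t_j$ vanishing at that point, which is possible because the natural generators in $S$ were defined precisely so that every zero of $a_{2d}$ on $U$ is a zero of some $t_j$.

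The main obstacle is exactly the explicit construction of the correction polynomials $R_j$ and verification of the pointwise estimate $\sigma_j t_j R_j \le f$ on $U\times\rr$. In \cite{MM_strip} this is carried out in the case $|S|=1$, $U=[0,1]$ via a direct analysis of the local behavior of $f$ near each zero of $a_{2d}$ in $[0,1]$, exploiting the fact that $f(u_0,y)$ has strictly smaller degree in $y$ than $f$ whenever $a_{2d}(u_0)=0$ (so $f$ is ``flat in $y^{2d}$'' there). The present setting introduces only the bookkeeping of handling the several generators in $S$ in turn rather than a single one; this is the straightforward generalization alluded to in the excerpt, and I expect no essentially new difficulty beyond organizing the construction generator by generator.
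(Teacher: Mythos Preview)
There is a genuine gap: the plan (i)--(iii) cannot be carried out. Any $h\in M$ satisfies $h\ge 0$ on $U\times\rr$, so for each $x_0\in U$ the coefficient of $y^{2d}$ in $h(x_0,\cdot)$ is nonnegative (it is either zero or the leading coefficient of a polynomial in $y$ that is $\ge 0$ on all of $\rr$). Hence, whenever $\deg_y h\le 2d$, the $y^{2d}$-coefficient of $g=f-h$ is $\le a_{2d}$ everywhere on $U$. At any zero $r\in U$ of $a_{2d}$ this makes that coefficient $\le 0$, while your conditions (i) and (ii) force it to be $\ge 0$; so it again vanishes at $r$, and (iii) fails. (Choosing $\deg_y h>2d$ would violate (ii).) Your supporting claim that ``every zero of $a_{2d}$ on $U$ is a zero of some $t_j$'' is also false on its own terms: the natural generators vanish only at the endpoints $a_1,b_1,\ldots,a_k,b_k$ of the constituent intervals, not at interior zeros of $a_{2d}$ such as $r=0$ in the example $U=[-1,1]$, $f=x^2y^2+1$.

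This reflects a misreading of Marshall's Lemma~2.1, which is not a subtract-a-correction argument. The paper's route (following Marshall) is a \emph{cancellation} argument: one reduces to showing that if $h\in\rr[x]$ is $\pm$ a product of linear factors $x-r$ with $r\in U$ and $h\ge 0$ on $U$, then $hf\in M$ implies $f\in M$. This is proved by induction on $\deg h$. Writing $hf=\sigma_0 s_0+\cdots+\sigma_{k+1}s_{k+1}$ and substituting $x=r$ for a root $r\in U$ of $h$, one uses positivity of the relevant $s_i(r)$ to force $\sigma_i(r,y)\equiv 0$, hence $(x-r)^2\mid\sigma_i$. When $r$ lies in the interior of $U$ every $s_i(r)>0$, so all the $\sigma_i$ acquire the factor $(x-r)^2$ (as does $h$, since it does not change sign there), and one divides through. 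When $r$ is an endpoint only some of the $\sigma_i$ are divisible; after dividing by the appropriate linear or quadratic generator one invokes closure of $M$ under multiplication (Lemma~\ref{dim1}) to see that the resulting expression is still in $M$. The new work beyond the single-interval case is this divisibility bookkeeping across the several generators, not an analytic estimate of the type you sketch.
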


\begin{proof}  Arguing exactly as in the proof of \cite[Lemma 2.1]{MM_strip},
we can reduce to showing that if
$h \in \rr[x]$ with $h \geq 0 \text{ on } U$,  and $h$ is $\pm$ a product of
linear factors $x - r$ with $r \in U$,
then for any $f \in \rr[x,y]$, $h f \in M$ implies $f \in M$.
The proof is by induction on $\deg h$.  If $\deg h = 0$, this is trivial, hence we assume $\deg h \geq 1$. 

For ease of exposition,  let $s_0 = 1, s_1 = x - a_1, s_2 = (b_1 - x)(a_2 - x) , \dots , s_k = (b_{k-1} - x)(a_k - x),
s_{k+1} = b_k - x$.  Since $hf \in M$, we have
\begin{equation} \label{lem4.1}
hf  =  \sig_0 s_0 + \sig_1 s_1 + \dots + \sig_{k+1} s_{k+1},
\end{equation}
where each $\sig_i \in \sum \rr[x,y]^2$.

Given $r \in U$ and suppose $x - r$ is  a factor of $h$.  There are several cases to consider.
\smallskip

\noindent
Case 1:  Suppose  $r$ is in the interior of $U$,
then since  $h$ does not change sign at $r$, it follows that
$(x - r)^2$ divides $h$.
Substituting $x = r$ into both sides of \eqref{lem4.1}, we have
$0 = \sum_{i=0}^{k+1} \sig_i(r,y) s_i(r)$.
Since  each $s_i(r)$ is positive, it follows that
$\sig_i(r,y) = 0$ for all $y \in \rr$.  Thus $\sig_i(r,y)$ is
identically zero, which implies that $x - r$ divides each
coefficient of $\sig_i(x,y)$, and consequently $x - r$ divides $\sig_i(x,y)$.
Since $\sig_i(x,y)$ is a sum of squares, it follows that $(x - r)^2$
divides $\sig_i(x,y)$.  Dividing both sides of \eqref{lem4.1} by $(x - r)^2$,
we are done by induction.

\medskip

\noindent
Case 2:  Suppose $s_1 = x - a_1$ or $s_{k+1} = x - b_k$ divides $h$.  We give the proof for $s_1$,
the proof for $s_{k+1}$ is the same.
If $ x - a_1$ divides $h$, substituting
$x = a_1$ into  \eqref{lem4.1}, we have
$0 = \sig_0(a_1,y)  + \sum_{i=2}^{k+1} \sig_i(a_1,y) s_i(a_1) $.  
Since $s_i(a_1) > 0$ for $2 \leq i \leq k+1$, arguing 
as in the first case, this implies that $(x - a_1)^2$ divides $\sig_i(x,y)$ for
$i = 2, \dots , k+1$.   Let $\tau_i(x,y) = \sig_i(x,y)/(x - a_1)^2 \in \sum \rr[x,y]^2$.   Dividing both sides of \eqref{lem4.1} by $x - a_1$, we
obtain
\begin{equation} \label{eqn2}
\frac{h}{x - a_1} f =  \tau_0  (x - a_1)  + \sig_1  + 
\tau_2 (x - a_1) s_2 + \dots +\tau_{k+1} (x - a_1) s_{k+1}
\end{equation}
By Lemma \ref{dim1},  $M$ is closed under
multiplication, hence $(x - a_1) s_i \in M$ for each $i$.  It follows that
 the right-hand side of \eqref{eqn2} is in
$M$ and we are done by induction.

\medskip

\noindent
Case 3:  Suppose neither Case 1 nor Case 2 applies, then $h$ contains a factor $x - a_i$ for
$2 \leq i \leq k$, or $x - b_i$ for $1 \leq i \leq k - 1$.  We give the proof for $x - a_i$, the
proof for $x - b_i$ is the same.  Since $h \geq 0$ on $U$ and  does
not change sign at any interior point of $U$, it follows that $h$ contains a factor $(x - a_i)^2$
or  a factor $(x - a_i)(b_i - x) = s_i$.   In the first case, applying the argument of Case 2
twice, we see that  $(x - a_i)^2$ must divide every term on the right-hand side of \eqref{lem4.1} and we
are done by induction. In the second case, we argue as in Case 2 to conclude that $s_i$ divides
every term on the right-hand side of \eqref{lem4.1} and we are again done by induction.

\end{proof}

\begin{lemma}\label{Lem6} We may assume that $f$ has finitely many zeros on $U \times \rr$.
\end{lemma}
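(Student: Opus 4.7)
The plan is to peel off a square factor from $f$ that accounts for every positive-dimensional component of its zero set on $U \times \rr$, leaving a cofactor with only finitely many zeros there.

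First I would factor $f$ into irreducibles in $\rr[x,y]$ and write $f = c \prod_i h_i^{e_i}$. By Lemma \ref{Lem4} the leading coefficient of $f$ as a polynomial in $y$ is strictly positive on $U$; this rules out any irreducible factor $h_i \in \rr[x]$ from vanishing at a point of $U$, since such a zero $x_0 \in U$ would force $f(x_0, y) \equiv 0$, contradicting that $f(x_0, \cdot)$ has positive leading coefficient. Consequently, every irreducible factor contributing infinitely many zeros to $V(f) \cap (U \times \rr)$ is genuinely bivariate.

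Next, fix such a bivariate irreducible $h_i$. Since $h_i$ has infinitely many real zeros in $U \times \rr$, its real zero set is $1$-dimensional; because the singular locus of the complex curve $V(h_i) \sse \cc^2$ is finite, the real locus contains smooth real points. At any such smooth point the implicit function theorem shows that $h_i$ changes sign, so on a small neighborhood inside $U \times \rr$ the function $h_i$ takes both signs. Since $f \geq 0$ on $U \times \rr$, the exponent $e_i$ must be even. Setting $q = \prod_{i \text{ bad}} h_i^{e_i/2}$ and $f_1 = c \prod_{i \text{ good}} h_i^{e_i}$, we obtain $f = q^2 f_1$.

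Finally I would verify that $f_1 \geq 0$ on $U \times \rr$ (by continuity, using that $q^2 > 0$ on a dense subset of $U \times \rr$), that $f_1$ has only finitely many zeros on $U \times \rr$ (each remaining irreducible factor contributes only finitely many), and that the positive-leading-coefficient hypothesis of Lemma \ref{Lem4} is inherited by $f_1$. Since the quadratic module $M$ is closed under multiplication by sums of squares, $f_1 \in M$ implies $f = q^2 f_1 \in M$, so it suffices to prove Theorem \ref{striptheorem} for $f_1$. The hard part will be the sign-change step: one must argue carefully that infinitely many real zeros in $U \times \rr$ truly force $V(h_i) \cap \rr^2$ to contain a smooth real branch across which $h_i$ changes sign, so that the parity-of-exponent conclusion is valid. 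Once this is settled, the factorization and the reduction are routine.
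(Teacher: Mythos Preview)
Your approach is correct and is precisely the argument of \cite[Lemma~2.2]{MM_strip} to which the paper defers: factor $f$ into irreducibles, observe that any irreducible factor with $\deg_y \geq 1$ whose zero set meets $U \times \rr$ infinitely often must appear to an even power (since it changes sign across a smooth real branch while $f \geq 0$), and peel off the resulting square. The sign-change step you flag is settled exactly along the lines you suggest: such an $h_i$ has only finitely many zeros on each vertical line (else $x - c \mid h_i$), hence only finitely many on the boundary of $U \times \rr$, and only finitely many singular zeros, so a smooth zero in the $2$-dimensional interior of $U \times \rr$ exists and the parity conclusion follows.
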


\begin{proof} The proof is essentially the same as the proof of \cite[Lemma 2.2]{MM_strip}.
\end{proof}

\begin{lemma}\label{Lem7} Suppose $f = \sum_{i=0}^{2d} a_i(x) y^i$ is non-negative on $U \times \rr$,
$f$ has only finitely many zeros in $U \times \rr$,
and $a_{2d} > 0$ on $U$.  Then there exists $\ep(x) \in \rr[x]$, with $\ep(x) \geq 0$ on $U$, such that $f(x,y) \geq \ep(x)(1 + y^2)^d$ holds on $U \times \rr$, and for each $x \in U$, $\ep(x) = 0$ if and only if there exists $y \in \rr$ such that $f(x,y) = 0$.
\end{lemma}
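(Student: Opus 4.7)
The plan is to define the function
$$g(x) := \inf_{y \in \rr} \frac{f(x,y)}{(1+y^2)^d} \qquad (x \in U)$$
and construct $\ep(x)$ as a suitable product of even powers of linear factors vanishing precisely on the finite zero set of $g$. First I would verify that $g$ is well-defined, continuous, and non-negative on $U$. For each fixed $x \in U$, the polynomial $f(x,\cdot)$ has degree exactly $2d$ with leading coefficient $a_{2d}(x) > 0$, so $f(x,y)/(1+y^2)^d \to a_{2d}(x)$ as $|y| \to \infty$, uniformly for $x$ in the compact set $U$. Upper semicontinuity of $g$ follows from writing it as an infimum of the continuous functions $x \mapsto f(x,y_0)/(1+y_0^2)^d$; lower semicontinuity follows from inspecting minimizing sequences, which either stay bounded (and give a limit value bounded below by $g(x)$ by joint continuity) or escape to infinity (and give a limit value $a_{2d}(x) \geq g(x)$). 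Since $f \geq 0$ we have $g \geq 0$, and because $f(x,\cdot)$ has positive leading coefficient in $y$ its infimum on $\rr$ is attained, so $g(x) = 0$ if and only if $f(x,y) = 0$ for some $y \in \rr$. By hypothesis the zero set $Z := g^{-1}(0)$ is the (finite) projection onto $U$ of the zero set of $f$ in $U \times \rr$; write $Z = \{x_1, \dots, x_m\}$.

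The heart of the argument is a local lower bound for $g$ at each $x_i \in Z$: I would show that there exist $N_i \in \nn$ and $c_i > 0$ such that
$$g(x) \geq c_i\,(x - x_i)^{2N_i}$$
for $x$ in a neighborhood of $x_i$ in $U$. Since $g$ is a continuous semialgebraic function on the compact semialgebraic set $U$ (its graph is the Tarski--Seidenberg projection of a semialgebraic set defined by $f$), this follows from the classical \L{}ojasiewicz inequality: there exist $c,N>0$ with $g(x) \geq c\,\operatorname{dist}(x,Z)^N$ on $U$, and near each $x_i$ one has $\operatorname{dist}(x,Z) = |x - x_i|$, so any $2N_i \geq N$ works. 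Alternatively, one can argue concretely by Puiseux-series expansion of the zero set of $f$ centered at each of the finitely many zeros $(x_i,y_{i,j})$ of $f$ above $x_i$, reading off the order of vanishing of $f$ transverse to those branches. This step is the main obstacle, since the remaining steps reduce to soft topological considerations.

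With the local bounds in hand, I would take $n := \max_i N_i$ and set
$$\ep(x) := c \prod_{i=1}^{m}(x - x_i)^{2n}$$
for a sufficiently small $c > 0$. Near each $x_i$, the local lower bound on $g$ combined with boundedness on $U$ of the factors $(x-x_j)^{2n}$ for $j \neq i$ gives $\ep(x) \leq g(x)$ provided $c$ is small enough; away from a fixed neighborhood of $Z$, $g$ is bounded below by a positive constant by continuity and compactness, while $\ep$ is bounded above on $U$, so shrinking $c$ further yields $\ep \leq g$ on all of $U$. Consequently
$$f(x,y) \geq g(x)(1+y^2)^d \geq \ep(x)(1+y^2)^d \text{ on } U \times \rr,$$
and by construction $\ep \geq 0$ on $U$ with $\ep(x) = 0$ if and only if $x \in Z$, i.e., if and only if there exists $y \in \rr$ with $f(x,y) = 0$, as required.
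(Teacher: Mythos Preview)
Your argument is correct. The continuity of $g$, the semialgebraicity of $g$ (via Tarski--Seidenberg), the application of the \L{}ojasiewicz inequality on the compact set $U$, and the final patching to produce $\ep(x)=c\prod_i (x-x_i)^{2n}$ all go through as you describe. One tiny inaccuracy: the infimum defining $g(x)$ need not be attained at a finite $y$ in general, but it \emph{is} attained on the two-point compactification $[-\infty,\infty]$ with value $a_{2d}(x)$ at the ends; since $a_{2d}(x)>0$ on $U$, the equivalence $g(x)=0 \Leftrightarrow \exists\, y\in\rr,\ f(x,y)=0$ still holds.

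Your route differs from the paper's. The paper does not argue directly on $U$; instead it invokes Marshall's \cite[Lemma~4.2]{MM_strip} on each interval $[a_i,b_i]$ separately to obtain polynomials $\ep_i$ with $f\ge \ep_i(1+y^2)^d$ on $[a_i,b_i]\times\rr$ and $\ep_i\neq 0$ off $[a_i,b_i]$, normalizes so that $\ep_i\le 1$ on $U$, and then sets $\ep=(\prod_i \ep_i)^2$. Thus the paper trades one black box (Marshall's interval lemma) for another (your use of \L{}ojasiewicz). Your approach is more self-contained and in fact works verbatim for any compact semialgebraic $U\subset\rr$, not just a finite union of intervals; the paper's approach stays closer to Marshall's original argument and avoids appealing to o-minimal/\L{}ojasiewicz machinery.
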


\begin{proof}By \cite[Lemma 4.2]{MM_strip} and its proof, for $i = 1, \dots, k$, there exists a polynomial $\ep_i(x) \in \rr[x]$, with  $\ep_i(x) \geq 0 \text{ on } [a_i, b_i]$, such that $f(x,y) \geq \ep_i(x)(1 + y^2)^d$ holds on $[a_i, b_i] \times \rr$, $\ep_i(x) = 0$ for $x \in [a_i, b_i]$ if and only if there exists $y \in \rr$ such that $f(x,y) = 0$, and $\ep_i(x) \neq 0$ for $x \in \rr \backslash [a_i, b_i]$.

Dividing each $\ep_i$ by the maximum of $\{ \ep_i(x) \mid x \in U \}$ and $1$, we may assume that each 
$\ep_i(x) \leq 1$ on $U$.  Let $\ep(x) =  \left(\prod_{i=1}^k \ep_i(x)\right)^2$, then $\ep(x) \geq 0$ on $U$, and $$f(x,y) \geq \ep(x)(1 + y^2)^d$$ holds on $U \times \rr$.  For each $x \in U$, the polynomial $\ep(x) = 0$ if and only if some $\ep_i(x) = 0$, hence $\ep(x) = 0$ if and only if there exists $y \in \rr$ such that $f(x,y) = 0$.
\end{proof}

In \cite[Lemma 4.4]{MM_strip}, it is shown that if $f \in \rr[x,y]$ such that $f \geq 0$ on $[0, 1] \times \rr$ and the leading coefficient of $f$ is positive on the interval $[0,1]$, then for each $r \in [0, 1]$ there is a representation of $f$ involving the generators of the quadratic module and functions of the
form  $\sum g_i^2$, where each $g_i$ is a  polynomial in $y$ with coefficients analytic functions of $x$ in some neighborhood of $r$.  In our case, we need the same result with $[0,1]$ replaced by $U$.
  This follows immediately from the \cite[Lemma 4.4]{MM_strip} unless $r = a_i$ for $2 \leq i \leq k$ or $r = b_i$ for $1 \leq i \leq k - 1$; for the latter cases we need one extra step.

\begin{lemma}\label{Lem8}Suppose $f \in \rr[x,y]$ is non-negative on $U \times \rr$, and the leading coefficient of $f$ as a polynomial in $y$ is strictly positive on $U$.  Then:

\begin{enumerate}
    \item For each $r$ in the interior of $U$,  there exist $g_1, g_2$ polynomials in $y$ with coefficients analytic functions of $x$ in some open neighborhood $V(r)$ of $r$, such that $f = g_1^2 + g_2^2$ on $V(r) \times \rr$.
    \item There exist $g_l, h_l,$ with $l = 1, 2$, polynomials in $y$ with coefficients analytic functions of $x$ in some open neighborhood $V(a_1)$ of $a_1$ such that $f = \sum_{l=1}^2 g_l^2 + \sum_{l=1}^2 h_l^2 (x - a_1)$ on $V(a_1) \times \rr$.
    \item For $i = 1, \dots, k - 1$, there exist $g_l, h_l,$ with $l = 1, 2$, polynomials in $y$ with coefficients analytic functions of $x$ in some open neighborhood $V(b_i)$ of $b_i$ such that $f = \sum_{l=1}^2 g_l^2 + \sum_{l=1}^2 h_l^2(b_i - x)(a_{i+1} - x)$ on $V(b_i) \times \rr$.
   \item For $i = 1, \dots, k - 1$, there exist $g_l, h_l, l = 1, 2$, polynomials in $y$ with coefficients analytic functions of $x$ in some open neighborhood $V(a_{i+1})$ of $a_{i+1}$ such that $f = \sum_{l=1}^2 g_l^2 + \sum_{l=1}^2 h_l^2(b_i - x)(a_{i+1} - x)$ on $V(a_{i+1}) \times \rr$.
     \item There exist $g_l, h_l$,with $l = 1, 2$, polynomials in $y$ with coefficients analytic functions of $x$ in some open neighborhood $V(b_k)$ of $b_k$, such that $f = \sum_{l=1}^2 g_l^2 + \sum_{l=1}^2 h_l^2 (b_k - x)$ on $V(b_k) \times \rr$.
\end{enumerate}
\end{lemma}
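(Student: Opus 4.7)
The plan is to reduce the five assertions to Marshall's local representation result \cite[Lemma 4.4]{MM_strip}, applied to appropriately chosen subintervals of $U$. Since the leading coefficient of $f$ in $y$ is strictly positive on $U$ and $f \geq 0$ on $U \times \rr$, the hypotheses of \cite[Lemma 4.4]{MM_strip} hold on each constituent interval $[a_i, b_i]$ of $U$, and hence on any closed subinterval thereof.

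Parts (1), (2), and (5) then follow immediately, as indicated in the text. For part (1), I would choose a closed subinterval of some $[a_i, b_i]$ containing the interior point $r$ in its interior, and invoke Marshall's lemma at that interior point to obtain $f = g_1^2 + g_2^2$ on $V(r) \times \rr$. For part (2), I would apply \cite[Lemma 4.4]{MM_strip} inside $[a_1, b_1]$ at the left endpoint $a_1$; the output is $f = \sum_{l=1}^2 g_l^2 + \sum_{l=1}^2 h_l^2 (x - a_1)$ on some neighborhood $V(a_1) \times \rr$, exactly matching the stated form. Part (5) is symmetric, applying the lemma inside $[a_k, b_k]$ at the right endpoint $b_k$.

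Parts (3) and (4) require the ``extra step'' alluded to in the text: converting the linear endpoint generator produced by Marshall's lemma into the quadratic gap generator appearing in $S$. Near $b_i$, apply \cite[Lemma 4.4]{MM_strip} inside $[a_i, b_i]$ at its right endpoint to obtain $f = \sum_{l=1}^2 g_l^2 + \sum_{l=1}^2 h_l^2 (b_i - x)$ on some $V(b_i) \times \rr$. On a small enough such neighborhood, $a_{i+1} - x > 0$, so $\varphi(x) := (a_{i+1} - x)^{-1/2}$ is real-analytic and positive; since $\varphi(x)^2 (b_i - x)(a_{i+1} - x) = b_i - x$, replacing each $h_l$ by $\varphi h_l$ (still a polynomial in $y$ with analytic coefficients in $x$) yields the representation claimed in (3). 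Part (4) is the mirror image: apply Marshall's lemma inside $[a_{i+1}, b_{i+1}]$ at $a_{i+1}$ to get $f = \sum_{l=1}^2 g_l^2 + \sum_{l=1}^2 h_l^2 (x - a_{i+1})$, then use the identity $(b_i - x)(a_{i+1} - x) = (x - b_i)(x - a_{i+1})$ together with the analytic positive function $\psi(x) := (x - b_i)^{-1/2}$ near $a_{i+1}$ to absorb the extra factor $(x - b_i)$ into the $h_l$.

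The argument is almost entirely bookkeeping, so I do not expect any substantive obstacle. The one point requiring care is that each neighborhood $V(r)$ must be small enough that the auxiliary factor ($a_{i+1} - x$ in (3), or $x - b_i$ in (4)) stays bounded away from zero, so that its reciprocal square root is genuinely analytic on $V(r)$.
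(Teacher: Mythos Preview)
Your proposal is correct and follows essentially the same route as the paper: parts (1), (2), (5) are direct applications of \cite[Lemma~4.4]{MM_strip}, and for (3) and (4) the paper does exactly your ``extra step,'' writing $f = \sum \varphi_l^2 + \sum \psi_l^2(b_i - x)$ near $b_i$ and then setting $h_l = \psi_l/\sqrt{a_{i+1}-x}$ (and symmetrically for (4)).
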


\begin{proof} (1), (2) and (5) follow from \cite[Lemma 4.4]{MM_strip}, using a change of variables, if necessary.

For (3), if $x$ is sufficiently close to $b_i$, by \cite[Lemma 4.4]{MM_strip}, there exist $\varphi_l(x,y), \psi_l(x,y)$, $l = 1, 2$, polynomials in $y$ with coefficients analytic functions of $x$ in some open neighborhood $V(b_i)$ of $b_i$, such that $$f = \sum_{l=1}^2\varphi_l^2 + \sum_{l=1}^2 \psi_l^2 \ts (b_i - x).$$  We have
\begin{displaymath}
\begin{array}{lll}
f &=&\displaystyle \sum_{l=1}^2\varphi_l^2 + \sum_{l=1}^2 \frac{\psi_l^2}{(a_{i+1} - x)} \ts (b_i - x)(a_{i+1} - x)\\
      &=&\displaystyle \sum_{l=1}^2 \varphi_l^2 + \sum_{l=1}^2 \left(\frac{\psi_l}{\sqrt{a_{i+1} - x}}\right)^2 \ts (b_i - x)(a_{i+1} - x).\\
\end{array}
\end{displaymath}

As $\displaystyle \frac{1}{\sqrt{a_{i+1} - x}}$ is analytic for $x$ close to $b_i$, by taking $g_l = \varphi_l$ and \\ $h_l = \displaystyle \frac{\psi_l}{\sqrt{a_{i+1} - x}}$, we get the desired result.

A similar proof shows that (4) holds.
\end{proof}

We need the following version of the Weierstrass Approximation Theorem, which is an immediate generalization of \cite[Proposition 4.5]{MM_strip}

\begin{prop}\label{Prop9} Suppose $\phi, \psi: U \rightarrow \mathbb{R}$ are continuous functions, where $U \sse \rr$ is compact, $\phi(x) \leq \psi(x)$ for all $x \in U$, and $\phi(x) < \psi(x)$ for all but finitely many $x \in U$.  If $\phi$ and $\psi$ are analytic at each point $a \in U$ where $\phi(a) = \psi(a)$ then there exists a polynomial $p(x) \in \mathbb{R}[x]$ such that $\phi(x) \leq p(x) \leq \psi(x)$ holds for all $x \in U$.
\end{prop}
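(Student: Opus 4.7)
The plan is to reduce to the single-interval case already proved in \cite[Proposition 4.5]{MM_strip}. Specifically, I would extend $\phi$ and $\psi$ from $U = [a_1,b_1] \cup \dots \cup [a_k,b_k]$ to continuous functions $\tilde\phi,\tilde\psi$ on the convex hull $[a_1,b_k]$ that meet the hypotheses of Marshall's proposition on that interval, then apply it and restrict to $U$.

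First I would construct the extensions. On each component $[a_i,b_i]$ I set $\tilde\phi = \phi$ and $\tilde\psi = \psi$. On each gap $(b_i, a_{i+1})$ the construction splits by cases. If neither endpoint is a point where $\phi = \psi$, any continuous extension with strict inequality $\tilde\phi < \tilde\psi$ on the open gap will do. If $b_i$ is such a critical point, then by hypothesis $\phi$ and $\psi$ are analytic at $b_i$, and I would define $\tilde\phi, \tilde\psi$ on a small right-neighborhood of $b_i$ inside the gap by the analytic continuations of $\phi, \psi$; treat $a_{i+1}$ symmetrically if it too is a critical point, and connect the resulting pieces across the middle of the gap by (for example) linear interpolation, perturbed by a small bump if needed to keep $\tilde\phi<\tilde\psi$ strict there.

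Next I would verify that the extended pair satisfies the hypotheses of \cite[Proposition 4.5]{MM_strip} on $[a_1,b_k]$. Continuity and $\tilde\phi \le \tilde\psi$ are immediate from the construction, and the set where $\tilde\phi = \tilde\psi$ is unchanged from the finite set of critical points in $U$: inside components nothing has changed, and inside each gap strict inequality is forced. The subtle point is near a boundary critical point such as $b_i$: since $\phi \le \psi$ on the left with equality at $b_i$, the Taylor expansion of $\psi-\phi$ at $b_i$ has leading term $c(x-b_i)^{2n}$ with $2n$ even and $c>0$, so the analytic continuation still satisfies $\tilde\psi - \tilde\phi > 0$ in a punctured right-neighborhood as well. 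Finally $\tilde\phi,\tilde\psi$ are analytic at each equality point because they coincide with $\phi,\psi$, or with the analytic continuations of $\phi,\psi$, in a full two-sided neighborhood.

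Applying \cite[Proposition 4.5]{MM_strip} then gives a polynomial $p(x)\in\rr[x]$ with $\tilde\phi \le p \le \tilde\psi$ on $[a_1,b_k]$, and restricting to $U$ yields $\phi \le p \le \psi$ on $U$, as required. The one genuinely delicate ingredient is producing an extension that is analytic across each critical point of $U$ that also happens to be a boundary endpoint of a component of $U$; the parity-and-sign observation about the leading Taylor term of $\psi-\phi$ is exactly what guarantees that the analytic-continuation extension still obeys $\tilde\phi \le \tilde\psi$ on the ``new'' side of the critical point.
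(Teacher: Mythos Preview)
The paper gives no proof at all: it simply asserts that the statement is ``an immediate generalization of \cite[Proposition 4.5]{MM_strip}'', meaning one reruns Marshall's Weierstrass-type argument verbatim with the compact set $U$ in place of $[0,1]$. Your route is genuinely different: rather than redo Marshall's proof, you try to \emph{reduce} to it by extending $\phi,\psi$ across the gaps of $U$ to functions on $[a_1,b_k]$.

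There is, however, a real gap in your argument, precisely at the ``parity-and-sign observation'' you flag as the delicate point. You claim that if $b_i$ is an endpoint of a component of $U$ with $\phi(b_i)=\psi(b_i)$, then the one-sided inequality $\psi-\phi\ge 0$ on $[a_i,b_i]$ forces the leading Taylor term of $\psi-\phi$ at $b_i$ to be $c(x-b_i)^{2n}$ with $c>0$. This is false: a one-sided sign condition at an endpoint does not force even order. Take $U=[0,1]\cup[2,3]$, $\phi\equiv 0$, and $\psi(x)=1-x$ on $[0,1]$ (with, say, $\psi\equiv 1$ on $[2,3]$). Then $\psi-\phi=1-x=-(x-1)$ near $b_1=1$, an odd-order zero. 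The analytic continuations into the gap $(1,2)$ give $\tilde\psi-\tilde\phi=1-x<0$ there, so your extended pair violates $\tilde\phi\le\tilde\psi$, and Marshall's proposition cannot be invoked. Since analyticity of $\tilde\phi,\tilde\psi$ at $b_i$ together with agreement with $\phi,\psi$ on the left \emph{forces} the extensions to be the analytic continuations, there is no wiggle room here.

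Your reduction can be repaired, but it needs an extra step you did not supply: before extending, replace $\psi$ near each boundary critical point $b_i$ (and symmetrically near each $a_{i+1}$) by a smaller analytic function $\psi'$ with $\phi\le\psi'\le\psi$ on $U$ and such that $\psi'-\phi$ vanishes to \emph{even} order at $b_i$; for instance, if $\psi-\phi=c(b_i-x)^{2m+1}(1+\dots)$ with $c>0$, take $\psi'=\phi+c(b_i-x)^{2m+2}$ on a small left-neighbourhood of $b_i$ and patch continuously with $\psi$ further inside the component. Then your extension-and-reduce argument goes through for $(\phi,\psi')$, and any $p$ with $\phi\le p\le\psi'$ also satisfies $\phi\le p\le\psi$. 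Alternatively, bypass the extension entirely and imitate Marshall's proof directly on $U$, which is what the paper has in mind.
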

\medskip

We are now ready to prove Theorem \ref{striptheorem}.  For ease of exposition,
denote the natural choice of generators $S$ for $U$ by
$\{s_1, \dots, s_{k+1}\}$, i.e., $$s_1 = x - a_1, s_2 = (b_1 - x)(a_2 - x),
\dots, s_{k+1} = b_k - x.$$
Let $f(x,y) = \displaystyle \sum_{j=0}^{2d} a_j(x)y^j,$ where $d \geq 1$,
$a_{2d}(x) > 0$ on $U$, and $f(x,y)$ has only finitely many zeros in
$U \times \rr$.  By Lemma \ref{Lem7}, we have $\ep(x) \in \rr[x]$ such
that $\ep(x) \geq 0$ on $U$, $f(x,y) \geq \ep(x)(1 + y^2)^d$, and $\ep(x) = 0$
iff $y \in U$ such that $f(x,y) = 0$.  Let $f_1(x,y) :=
f(x,y) - \ep(x)(1 + y^2)^d$, then $f_1 \geq 0$ on $U \times \rr$.  Replacing
$\ep(x)$ by $\frac{\epsilon(x)}{N}, N > 1$, if necessary, we can assume
$f_1$ has degree $2d$ as a polynomial in $y$, and the leading coefficient of
$f_1$ is positive on $U$.

By Lemma \ref{Lem8}, for each $r \in U$, there exists an open neighborhood $V(r)$ of $r$ so that
\begin{equation}\label{eqn4}
f_1 =\displaystyle \sum_{j=1}^2 g_{0,j,r}(x,y)^2 + \sum_{j=1}^2 g_{1,j,r}(x,y)^2 \ts s_1 + \dots + \sum_{j=1}^2 g_{k+1,j,r}(x,y)^2 \ts s_{k+1}
\end{equation} on $V(r) \times \rr$, where $g_{i,j,r}(x,y)$ are polynomials in $y$ of degree $\leq d$ with coefficients analytic functions of $x$ in $V(r)$, for $i = 0,..., k + 1$ and $j = 1, 2$.  If $r$ is in the interior of $U$, note that $g_{i,j,r} = 0$ for $i \neq 0$.  If $r = a_1$, then $g_{i,j,r} = 0$ for $i \neq 1$, etc.

The rest of the proof follows along the lines of the proof of Theorem \ref{mar}.
Since $U $ is compact, there are finitely many $V(r_1),\dots, V(r_p)$ which
cover $U$ and, since $\ep(x)$ has only finitely many roots in $U$,  we choose the open cover so that no $V(r_l)$ contains more
than one root of $\ep(x)$, and no root is in more than one $V(r_l)$.
Let $1 = \nu_1 +...+ \nu_p$  be a partition of unity corresponding to the
open cover of $U$, and note that by construction, if a root $u$ of
$\ep(x)$ is in $V(r_l)$, then $\nu_l(x) = 1$ for $x$ close to $u$.
Since $U $ is compact, there are finitely many $V(r_1),\dots, V(r_p)$ which cover $U$.

Define $\varphi_{i,j,l}$, polynomials in $y$ with coefficients functions
of $x$ as follows:  The coefficient of $y^q$ in $\varphi_{i,j,l}$ is
$\sqrt{\nu_l(x)}$ times the coefficient of $y^q$ in $g_{i,j,r_l}$.  Then
we have
\begin{equation}
f_1 = \displaystyle \sum_{l=1}^p \nu_l f_1 = \sum_{l=1}^p \left(\sum_{j=1}^2\varphi_{0,j,l}^2 + \sum_{j=1}^2\varphi_{1,j,l}^2 \ts s_1 + \dots + \sum_{j=1}^2\varphi_{k+1,j,l}^2 \ts s_{k+1}\right)
\end{equation} on $U \times \rr$.

We approximate the coefficients of the $\varphi_{i,j,l}$'s by
polynomials, using Proposition \ref{Prop9}.  Fix $\varphi_{i,j,l}$ and a
coefficient $u(x)$.
Define $\phi, \psi: U \rightarrow \rr$ by $\phi(x) =
u(x) - \displaystyle \frac25 \ep(x)$, and $\psi(x) = u(x) + \displaystyle
\frac25 \ep(x)$.  Then by our construction, $\phi(x)$ and $\psi(x)$
satisfy all of the conditions of Proposition \ref{Prop9}, and so
there exists $w \in \rr[x]$ such that
\begin{equation}\label{eqn5}
u(x) - \displaystyle \frac{2}{5} \ep(x) \leq w(x) \leq u(x) + \frac{2}{5} \ep(x), \text{ for each } x \in U.\end{equation}

Now we use these $w(x)$'s to define, for each triple $i, j, l$, a polynomial $h_{i,j,l}$, where $\deg_y h_{i,j,l} = \deg_y \varphi_{i,j,l}$, and if $u(x)$ is the coefficient of $y$ in $\varphi$, and $w(x)$ is the coefficient of $y$ in $h$, then \eqref{eqn5} holds.  Finally, let

$$ h_l(x,y) := \sum_{j=1}^2 h_{0,j,l}(x,y)^2 + \sum_{j=1}^2 h_{1,j,l}(x,y)^2 \ts s_1 + \dots + \sum_{j=1}^2 h_{k+1,j,l}(x,y)^2 \ts s_{k+1}$$

We have polynomials $h_l$ and $\delta \in \rr[x,y]$ such that
$$f_1 = \displaystyle \left(\sum_{l=1}^p h_l(x,y)\right) + \delta(x,y),$$  where $\delta(x,y) = \sum_{i=0}^{2d} c_i(x)y^i$ and  $|c_i(x)| \leq \frac{2}{5}
\ep(x)$ on $U$, for all $i$.

This yields $f(x,y) = f_1(x,y) + \epsilon(x)(1 + y^2)^d =
\sum_{l=1}^p h_l(x,y) + t_1(x,y) + t_2(x,y)$, where

$t_1(x,y) := \displaystyle \frac{2}{5}\epsilon(x)(2 + y + 3y^2 + y^3 +3y^4 + ... + y^{2d-1} + 2y^{2d}) + \sum_{i=0}^{2d} c_i(x)y^i$,\\
$t_2(x,y) := \displaystyle \epsilon(x)[(1 + y^2)^d - \frac{2}{5}(2 + y + 3y^2 + y^3 + 3y^4 + ... + y^{2d-1} + 2y^{2d})]$.\\

We have $ \sum_{l=1}^p h_l(x,y) \in T$  and we can prove that
$t_1, t_2 \in T$ exactly as in \cite{MM_strip}.
Therefore $f(x,y) \in T$.  This completes the proof of Theorem \ref{striptheorem}.

\medskip

Suppose $\tilde U \sse \rr$ is a non-compact basic closed semialgebraic set.  An obvious question
to ask is what happens if we replace $U \times \rr$ by $U \times \tilde U$?  First we note that by
\cite[Thm. 2]{PR1}, if $S \sse \rr[x,y]$ such that  $K_S = U \times \rr^+$, then $M_S$ cannot be
saturated, regardless of the choice of generators $S$.   Furthermore, if $S \sse \rr[x]$ generates
$\tilde U$ as a semialgebraic set in $\rr$, then $T_S$ is saturated iff $S$ contains the natural choice
of generators \cite[Thm. 2.2]{KM_MP}.  This means that the best theorem we could hope for
is the following:  Let $S_1 \sse \rr[x]$ be the natural choice of generators for $U$ and $S_2 \sse \rr[y]$
the natural choice of generators for $\tilde U$, then the preordering in $\rr[x,y]$ generated by
$S_1 \cup S_2$ is saturated.  We have the following partial result, which is \cite[Cor. 11]{P}:

\begin{theorem}  Let $U$, $\tilde U$, $S_1$, and $S_2$ be as above  and $T$ the preordering
in $\rr[x,y]$ generated by $S_1 \cup S_2$.  If 
$f = \sum_{i=0}^d a_i(x) y^i \sse \rr[x,y]$ such that $f > 0$ on $U \times \tilde U$ and
$a_d > 0$ on $U$, then $f \in T$.
\end{theorem}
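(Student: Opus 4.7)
My plan is to adapt the strategy of the proof of Theorem \ref{striptheorem}: establish a local representation of $f$ in a neighborhood of each point of $U$ with coefficients analytic in $x$, patch these together via a partition of unity on the compact set $U$, and finally approximate the analytic coefficients by polynomials using Weierstrass approximation, absorbing the error using the strict positivity hypothesis.

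For each fixed $r \in U$, the one-variable polynomial $f(r,y) \in \rr[y]$ is non-negative on $\tilde U$ and has positive leading coefficient, so by \cite[Thm.~2.2]{KM_MP} it lies in the preordering in $\rr[y]$ generated by $S_2$. The first substantive step is to upgrade this pointwise statement to a local one: for each $r \in U$, I would find an open neighborhood $V(r)$ and a representation
$$f(x,y) = \sum_e \sig_e(x,y) \ts s^e$$
valid on $V(r) \times \rr$, where $s^e$ ranges over squarefree products of elements of $S_2$ (together with a factor from $S_1$ when $r$ is on the boundary of $U$, in the manner of Lemma \ref{Lem8}) and the $\sig_e(x,y)$ are polynomials in $y$ whose coefficients are analytic functions of $x$ on $V(r)$. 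This should follow from factoring $f(x,y)$ as a polynomial in $y$ via Weierstrass preparation or Puiseux expansions of the roots: since $f > 0$ on $U \times \tilde U$, all real roots of $f(x,\cdot)$ lie in the complement of $\tilde U$, and grouping them according to which gap of $\tilde U$ they belong to determines the exponent pattern $e$ attached to each squared factor.

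Using compactness of $U$, I would extract a finite subcover $V(r_1),\dots,V(r_p)$ and a partition of unity $1 = \nu_1 + \cdots + \nu_p$ on $U$, then patch the local representations exactly as in the proof of Theorem \ref{striptheorem} (replacing each coefficient by $\sqrt{\nu_l(x)}$ times itself) to produce a global representation on $U \times \rr$ with analytic-in-$x$ coefficients. Since $f > 0$ on the closed set $U \times \tilde U$ and $f(x,y) \to +\infty$ uniformly in $x \in U$ as $|y| \to \infty$ (using $a_d > 0$ on compact $U$), there exists $\ep(x) \in \rr[x]$ with $\ep > 0$ on $U$ and $f(x,y) \geq \ep(x)$ on $U \times \tilde U$. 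Applying Proposition \ref{Prop9} to approximate each analytic coefficient within $\frac{2}{5}\ep(x)$ by a polynomial, and absorbing the resulting error via a sum-of-squares-plus-preordering identity as in the final step of the proof of Theorem \ref{striptheorem}, yields the desired representation of $f$ in $T$.

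The main obstacle is the local analytic representation step. Because $S_2$ has several generators corresponding to distinct (bounded and unbounded) components of $\tilde U$, the choice of squarefree product $s^e$ accompanying each squared factor depends on where the real roots of $f(x,\cdot)$ sit relative to the gaps of $\tilde U$, and this combinatorial assignment can change as $x$ crosses a value at which two real roots of $f(x,\cdot)$ coalesce. Handling these coalescences analytically, in a way compatible with the partition-of-unity patching, is the technical heart of the argument and should be achievable by the Puiseux-series technique of \cite[Lemma 4.4]{MM_strip} applied component by component.
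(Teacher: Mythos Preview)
The paper does not prove this theorem at all: it is quoted verbatim as \cite[Cor.~11]{P} and no argument is given. So there is nothing in the paper to compare your proposal against.

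As for the proposal itself, there is a genuine gap in the error-absorption step. You claim that strict positivity on $U\times\tilde U$ together with $a_d>0$ on $U$ yields some $\ep(x)\in\rr[x]$ with $\ep>0$ on $U$ and $f(x,y)\geq\ep(x)$ on $U\times\tilde U$, and that this buffer suffices to absorb the approximation error ``as in the final step of the proof of Theorem~\ref{striptheorem}.'' But in that proof the buffer is $\ep(x)(1+y^2)^d$, not $\ep(x)$, and this growth in $y$ is essential: after replacing analytic coefficients by polynomial approximants, the discrepancy $\delta(x,y)=\sum_i c_i(x)y^i$ has $y$-degree comparable to that of $f$, and on the unbounded set $\tilde U$ a constant-in-$y$ buffer $\ep(x)$ cannot dominate it. You would need a lower bound of the form $f(x,y)\geq \ep(x)\,p(y)$ with $p$ of the correct degree and positive on $\tilde U$, together with an explicit identity (analogous to the $t_1,t_2$ splitting in the proof of Theorem~\ref{striptheorem}) showing that $\ep(x)p(y)+\delta(x,y)$ lies in $T$. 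Neither ingredient is supplied, and when $d$ is odd (which is possible since $\tilde U$ need only be bounded on one side) the obvious candidate $(1+y^2)^{d/2}$ is unavailable, so the adaptation is not routine.

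The local-representation step is also only sketched. You correctly identify the difficulty: as $x$ varies, real roots of $f(x,\cdot)$ can cross between different gaps of $\tilde U$, changing which squarefree product $s^e$ from $S_2$ they should be attached to. Invoking ``the Puiseux-series technique of \cite[Lemma~4.4]{MM_strip} applied component by component'' does not address this, since that lemma handles a single interval and a single boundary generator; here the combinatorics of several generators of $S_2$ interacting simultaneously is precisely the new content, and you have not shown how to make the assignment analytic in $x$ across coalescence points.
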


\medskip
\noindent
{\bf Question}:  Is the above theorem true without one or both of the assumptions on $f$?

\section{Half-strips and further examples}

In this section we look at non-compact basic closed semialgebraic
subsets of a strip $[a,b] \times \rr$   which are bounded as
$y \rightarrow -\infty$.  We
refer to such a set as a {\it half-strip} in $\rr^2$.  We give a
representation theorem for the half-strip 
$\{ (x,y) \in \rr^2 \mid x \in U,  y \geq q(x) \}$,  where
$U \sse \rr$ is compact and $q(x) \in \rr[x]$ with $q(x) \geq 0$ on $U$.
This follows from
Theorem \ref{striptheorem} by an
elementary argument. We give a few other examples of saturated preorderings in
the half-strip case as well as a family of negative examples.  Finally, we use
Theorem \ref{striptheorem} to 
give an example of a non-compact surface in $\rr^3$ for which
the corresponding preordering is saturated.

\begin{remark} Suppose $U \sse \rr$ is compact and $S$ the natural choice of
generators for $U$.  We saw in the previous section that in $\rr[x]$, 
the preordering generated by $S$ and the quadratic module generated by $S$ are the same
and hence the same is true in $\rr[x,y]$.  
However, in \cite[Thm. 2]{PR}, it is shown that if $S$ any set of generators
in $\rr[x]$ for $[0,1]$, then the quadratic module generated by $S$ and $y$
is not saturated.   Hence in the half-strip case, our representation theorems
will hold only for  preorderings and not quadratic modules as in the strip case.
\end{remark}

\begin{theorem} \label{halfstrip} Given compact $U \sse \rr$ with natural choice of
generators $\{ s_1, \dots , s_k \}$ and $q(x) \in \rr[x]$ with $q(x) \geq 0$ on $U$, set
$S = \{s_1, \dots , s_k, y - q(x)\}$ and let $K$  be the half-strip $K_S$.  If $T$ is
the preordering in $\rr[x,y]$ generated by $S$, then $T$ is saturated.
\end{theorem}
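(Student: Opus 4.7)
The plan is to reduce to Theorem \ref{striptheorem} by means of the substitution $y = q(x) + z^2$, which converts the half-strip $\{(x,y) : x \in U,\ y \geq q(x)\}$ into the full strip $U \times \mathbb{R}$ in the $(x,z)$-plane. Since $q(x) + z^2 \geq q(x)$ for every $z \in \mathbb{R}$, the polynomial $F(x,z) := f(x, q(x)+z^2) \in \mathbb{R}[x,z]$ is non-negative on $U \times \mathbb{R}$. Applying Theorem \ref{striptheorem} with the natural generators $s_1,\dots,s_k \in \mathbb{R}[x]$ (which do not involve $z$), we obtain a representation
\begin{equation*}
F(x,z) \;=\; \sigma_0(x,z) + \sigma_1(x,z)\,s_1 + \cdots + \sigma_k(x,z)\,s_k
\end{equation*}
with each $\sigma_i \in \sum \mathbb{R}[x,z]^2$.

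Next I would take the even part in $z$ of both sides. The left side is already even in $z$ by construction, and the coefficients $s_i$ are independent of $z$, so this forces the even parts $\sigma_i^{\mathrm{ev}}(x,z)$ to satisfy the same identity. The key algebraic observation is that for any $g(x,z) \in \mathbb{R}[x,z]$, if we split $g = A(x,z^2) + z\,B(x,z^2)$ into its even and odd parts in $z$, then the even part of $g^2$ is precisely $A(x,z^2)^2 + z^2 B(x,z^2)^2$. Therefore the even part of any sum of squares in $\mathbb{R}[x,z]$ has the form $P(x,z^2) + z^2 Q(x,z^2)$ with $P,Q$ sums of squares in $\mathbb{R}[x,z^2]$.

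Now I substitute $z^2 = y - q(x)$. Each $\sigma_i^{\mathrm{ev}}(x,z)$ becomes a polynomial of the form $\widetilde\sigma_i(x,y) = \alpha_i(x,y) + (y - q(x))\,\beta_i(x,y)$ with $\alpha_i,\beta_i \in \sum \mathbb{R}[x,y]^2$, i.e.\ an element of the preordering generated by $y - q(x)$ in $\mathbb{R}[x,y]$. Substituting $z^2 = y-q(x)$ in the identity for $F$ recovers $f(x,y)$ on the left (since $F(x,z)$ is a polynomial in $x$ and $z^2$, the even part in $z$ evaluated at $z^2 = y-q(x)$ gives back $f$), and on the right we obtain
\begin{equation*}
f(x,y) \;=\; \widetilde\sigma_0(x,y) + \widetilde\sigma_1(x,y)\,s_1 + \cdots + \widetilde\sigma_k(x,y)\,s_k.
\end{equation*}
Expanding each $\widetilde\sigma_i$ exhibits $f$ as a sum of terms $\alpha_i s_i$ and $\beta_i (y - q(x)) s_i$, each of which lies in the preordering $T$ generated by $S = \{s_1,\dots,s_k,\,y-q(x)\}$. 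Hence $f \in T$.

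The only nontrivial ingredient is Theorem \ref{striptheorem} itself; granted that, the argument is essentially formal, and the main thing to be careful about is the even/odd decomposition and the verification that the ``even part'' operation commutes with the equation $F = \sum \sigma_i s_i$ (which it does because the $s_i$ depend only on $x$). There is no obstacle from the hypothesis $q(x) \geq 0$ on $U$ in the construction above; that hypothesis enters only in guaranteeing that $K_S$ is the intended half-strip, rather than all of $U \times \mathbb{R}$ when $q$ fails to bound $y$ from below on $U$.
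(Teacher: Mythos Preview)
Your argument is correct and is essentially the paper's own proof. The paper first reduces to the case $q=0$ via the affine substitution $v=y-q(x)$, and then handles $U\times\rr^+$ by setting $y\mapsto y^2$, applying Theorem~\ref{striptheorem}, and using the same even/odd splitting identity you use; you have simply merged these two substitutions into the single change of variable $y=q(x)+z^2$, which is a mild streamlining but not a different idea.
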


\begin{proof}  We first claim that it is enough to prove the theorem
for $q(x) = 0$, i.e., the case where the semialgebraic set is $U \times \rr^+$ 
with generators $\{ s_1, \dots , s_k, y \}$.
Suppose that the preordering $W \sse \rr[u,v]$ generated by $\{s_1(u), \dots , s_k(u), v \}$ is saturated
and we have $f(x,y)  = \sum_{i=0}^{k} a_i(x) y^i$ is non-negative on $K$.
 Define
$g$ in $\rr[u,v]$ by $g(u,v) := \sum a_i(u)(q(u) + v)^j$.  Then $f(x,y)
\geq 0$ on $K$ implies $g(u,v) \geq 0$ on $U \times \rr^+$.  Hence $g \in W$.
Substituting $u = x, v = y - q(x)$ in a representation of $g$ in $W$, we obtain 
a representation of $f$ in $T$.

We are reduced to proving the theorem for $S = \{ s_1, \dots , s_k, y \}$.  If
$f(x,y) \geq 0$ on $U \times \rr^+$, then $f(x,y^2) \geq 0$ on
$U \times \rr$.   Then
by Theorem \ref{striptheorem}, we can write $f(x,y^2)$ as a sum of
terms of the form $(\sum_{i=1}^m h_i(x,y)^2 ) s_i$ (where we set $s_0 = 1$).

We have
$$\sum h_i(x,y)^2 = \frac{1}{2} \sum h_i(x,y)^2 + \frac{1}{2} \sum h_i(x,-y)^2.$$ 
Using the standard identity
$$ \frac12 \left(\sum_i a_iy^i\right)^2 +  \frac12 \left(\sum_i a_i(-y)^i\right)^2 =  \left(\sum_j a_{2j}y^{2j}\right)^2 + \left(\sum_j a_{2j+1} y^{2j}\right)^2 \cdot y^2$$
we have that $f(x,y^2)$ can be written as a sum of polynomials of the form
$$\left(\sum_{i=1}^l \sig_i(x,y^2)^2 + \tau_i(x,y^2)^2 \cdot y^2\right) s_i.$$
Replacing $y^2$ by $y$ yields a representation of $f(x,y)$ in $T$.
\end{proof}

 Combining Theorem \ref{halfstrip} with a substitution technique from
 work of Scheiderer \cite{S}, we can obtain more examples of half-strips
 for which the corresponding preordering is saturated.

 \begin{example}  Let $S = \{x - x^2, xy - 1\}$ so that $K_S$ is the upper
half of $\rr^+$
cut by $xy = 1$.   We claim that
$T_S$ is saturated.  

\begin{figure}[ht]
\centering
\includegraphics[scale=0.6]{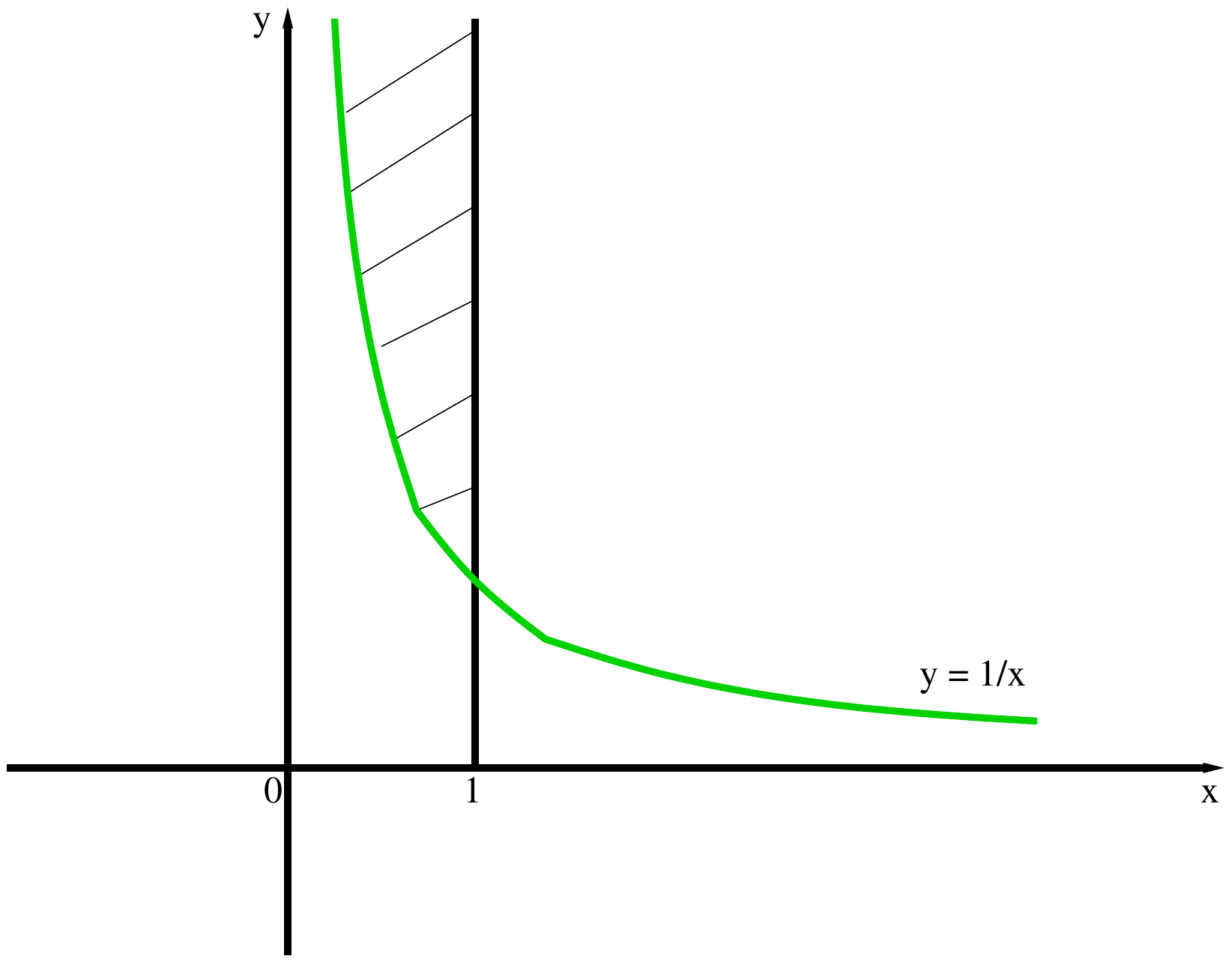}
\caption{$\rr^+$  cut by $xy = 1$}\label{halfstripxy}
\end{figure}

 Suppose $f(x,y) \geq 0$ on $K_S$.  Pick an integer $n \geq 0$ large enough so that $x^{2n}f \in \rr[x, xy]$.  Define $g$ in $\rr[u,v]$ by $g(u,v) := u^{2n}f(u,\frac{v}{u})$, i.e., $g(x, xy) = x^{2n}f(x,y)$ .  As $f(x,y) \geq 0$ on $K_S$, $g(u,v) \geq 0$ on
 $[0,1] \times [1,\infty)$, hence by Theorem \ref{halfstrip} there exist sums of squares $\sig_0, \sig_1, \sig_2, \sig_3 \in
 \rr[u,v]$ such that
$$g(u,v) = \sig_0 + \sig_1 (u - u^2) + \sig_2 (v-1) + \sig_3 (u - u^2)(v -1).$$
Then  $x^{2n} f(x,y) = $
\begin{equation} \label{sub}
 \sig_0(x,xy) + \sig_1(x,xy)(x - x^2) + \sig_2(x,xy)(xy -1)+ \sig_3(x,y)(x - x^2)(xy - 1).
\end{equation}

Define $s_m :=\displaystyle \frac{\sig_m}{x^{2n}}, m = 0,\dots,3$. As $x^{2n}$ divides each of the sums on the right hand side of
(\ref{sub}), the $s_m$ are sos in $\rr[x,y]$.  Thus $f$ can be written as $$f(x,y ) = s_0(x,y) + s_1(x,y)(xy - 1) + (s_2(x,y) + s_3(x,y)(xy - 1))(x- x^2)$$  Hence $f \in T_S$.
\end{example}

Next we give an example of $S \sse \rr[x,y,z]$ such that $K_S$ is non-compact of dimension 2, and $T_S$ is saturated.

\begin{example} Suppose $S = \{1 - x^2, z - x^2, x^2 - z\}$ so that $ K_S $ 
$\{(x,y,z) \mid -1 \leq x \leq 1, z = x^2\}.$    We claim $T_S$ is saturated.


Given $f(x,y,z) \geq 0$ on $K_S$, write $f = \sum g_i(x,y) z^i = \sum g_i(x,y)(z^i - x^{2i}) + \sum g_i(x,y) x^{2i},$ where $g_i(x,y) \in \rr[x,y]$.  Then $\sum g_i(x,y)(z^i - x^{2i})$ is in the ideal generated by $z - x^2$ and hence in $T_S$.  
Let $g(x,y) = \sum g_i(x,y) x^{2i} = f(x,y,x^2)$.  Since $f(x,y,z) \geq 0$ on $K$, this implies that $g(x,y) \geq 0$ on $[-1, 1] \times \rr$.  By Theorem \ref{mar}, we have $g(x,y) = \sig(x,y) + \tau(x,y)(1 - x^2)$, where $\sig, \tau \in \sum \rr[x,y]^2$.  Thus $f \in T_S$.
\end{example}

We end with  a family of examples of half-strips for which no corresponding finitely generated
preordering is saturated.  This is a generalization of an example due to T.~Netzer, see
\cite[Lemma 7.4]{CKS}.

\begin{prop} Suppose $m \in \nn$ is even and $q(x) \in \rr[x]$ with
$\deg q$ odd and $q(x) \geq 0$ on $[0,1]$. Let
$K = \{ (x,y) \in \rr^2 \mid 
0 \leq x \leq 1,y^m \geq q(x), y \geq 0 \}$.  Then is no finite set of generators $S \sse \rr[x,y]$
with $K_S = K$ such that $T_S$ is saturated. 
\end{prop}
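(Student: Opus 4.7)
The plan is to argue by contradiction, generalizing T.~Netzer's construction from \cite[Lemma 7.4]{CKS}, which handles the special case $m = 2$, $q(x) = x$. Suppose some finite set $S \sse \rr[x,y]$ satisfies $K_S = K$ and $T_S$ is saturated. I will aim to exhibit a polynomial $f \geq 0$ on $K$ that cannot be written in the form $\sum_{e \in \{0,1\}^{|S|}} \sigma_e s^e$ with $\sigma_e \in \sum \rr[x,y]^2$.

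The first step is to exploit the parity hypotheses to analyze the lower boundary curve $C := \{(x,y) \in K : y^m = q(x)\}$. Because $m$ is even and $\deg q$ is odd, Capelli's criterion for the irreducibility of binomials (applied to $y^m - q(x)$ viewed in $\rr(x)[y]$, noting that $q$ cannot be a $p$-th power in $\rr(x)$ for any prime $p \mid m$ on degree grounds, and similarly is not in $-4\,\rr(x)^4$ when $4 \mid m$) shows that $y^m - q(x)$ is irreducible in $\rr[x,y]$. Hence $C$ is analytically irreducible, and for any $x_0 \in (0,1)$ with $q(x_0) > 0$ the branch of $C$ near $(x_0, q(x_0)^{1/m})$ is parametrized by $y = q(x)^{1/m}$.

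The second step is to introduce a parity invariant along $C$ that separates $T_S$ from the target obstructor. For $g \in \rr[x,y]$, consider its image in the integral domain $\rr[x,y]/(y^m - q(x)) \cong \rr[x,q(x)^{1/m}]$, expanded as $\bar g = \sum_{j=0}^{m-1} g_j(x) \, q(x)^{j/m}$. Any $\sigma \in \sum \rr[x,y]^2$ restricts to a sum of squares in this normalization; combined with the sign constraint $s_i \geq 0$ on $K$ and the fact that $C$ forms a one-sided real boundary of $K$, this imposes parity restrictions on the exponents $j/m$ that can appear in $\sum_e \sigma_e s^e$. The crucial ingredient is that $\deg q/m \notin \zz$ (because $\deg q$ is odd and $m$ is even), so the Puiseux expansion of $q(x)^{1/m}$ at a suitable point contributes a truly non-integer power of a local parameter; the obstructor $f$ will be designed so that its restriction to $\rr[x, q^{1/m}]$ realizes a mode forbidden by the parity constraints.

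The main obstacle is the explicit construction of the obstructor $f$: it must simultaneously be non-negative on all of $K$ and carry, under the above expansion along $C$, a term unreachable by any combination in $T_S$. In Netzer's case, a low-degree $f$ suffices; extending to general even $m$ and odd-degree $q$ requires tracking the Puiseux expansion of $q(x)^{1/m}$ at a carefully chosen point of $C$ and adding a squared cofactor to secure non-negativity on the rest of $K$ without disturbing the targeted odd-order contribution. Once such an $f$ is exhibited, the parity obstruction of the second step delivers the contradiction, showing that no such $S$ can exist.
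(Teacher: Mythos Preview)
Your proposal is a plan rather than a proof, and the plan has a genuine gap. Steps 2 and 3 are never carried out: you do not define the ``parity invariant'' on $\rr[x,y]/(y^m-q(x))$, you do not prove which modes $\sum_j g_j(x)\,q(x)^{j/m}$ are reachable by elements of $T_S$, and you never construct the obstructor $f$. The sentence ``the main obstacle is the explicit construction of the obstructor $f$'' is followed only by a description of what such an $f$ would have to do, not by a construction. More seriously, it is unclear that the parity idea can be made to work for an \emph{arbitrary} generating set $S$: the only information you have about the $s_i$ is $s_i\ge 0$ on $K$, so their restrictions to the boundary curve $C$ can be any non-negative element of the coordinate ring, and products $s^e$ need not respect any parity grading in the $q^{1/m}$-expansion. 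Without a precise invariant that survives multiplication by arbitrary such $s_i$, the obstruction argument does not get off the ground.

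The paper's argument is both simpler and avoids this difficulty entirely by attacking the generators rather than building an obstructor. Fix $S=\{g_1,\dots,g_s\}$ with $K_S=K$ and $T_S$ saturated. For each $c\in[0,1]$, restricting to the vertical line $x=c$ gives a saturated preordering in $\rr[y]$ describing the half-line $[q(c)^{1/m},\infty)$. By the one-dimensional theory of Kuhlmann--Marshall, one of the $g_i(c,y)$ must be a positive scalar multiple of $y-q(c)^{1/m}$. By pigeonhole the same index $i$ works for infinitely many $c$; writing $g_i(x,y)=\sum_j a_j(x)y^j$ and comparing coefficients gives $a_0(c)=-a_1(c)\,q(c)^{1/m}$ on an infinite set, hence the polynomial identity $a_0(x)^m=a_1(x)^m q(x)$. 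But the left side has even degree (since $m$ is even) while the right side has odd degree (since $\deg q$ is odd), a contradiction. No explicit obstructor is needed.
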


\begin{proof}
Suppose $S = \{ g_1,\dots, g_s \} \sse \rr[x,y]$ is such that $K_S = K$ and
$T_S$ is saturated.  For $c \in [0,1]$, let $T_c$ be the preordering in $\rr[x]$ generated by
$\{ g_1(c,y),\dots, g_s(c,y)\}$, then $T$ saturated implies that $T_c$ is saturated.  Since
$\{g_1(c,y) \geq 0,\dots,g_s(c,y) \geq 0\} = [q(c)^\frac{1}{m}, \infty)$, by Theorem 2.1 and 2.2 in \cite{KM_MP},
$y - q(c)^{\frac{1}{m}}$ must be among the $g_i(c,y)$ up to a constant factor.  Without lost of generality, we can assume $$g_1(c,y) = r(c)\left(y - q(c)^{\frac{1}{m}}\right),$$ for infinitely many $c \in [0,1]$ and some positive function $r$.  Let $d$ be the degree of $g_1(x,y)$ in $y$, and write $g_1(x,y) = \sum_{i=0}^d a_i(x) y^i$ with $a_i(x) \in \rr[x]$.  Then $$g_1(c,y) =  r(c)\left(y - q(c)^{\frac{1}{m}}\right) = a_0(c) + a_1(c)y +\dots+ a_d(c)y^d$$  for infinitely many $c \in [0,1]$.  Comparing coefficients, this implies $a_0(c) = -r(c)q(c)^{\frac{1}{m}}$ and $a_1(c) = r(c)$ for infinitely many $c \in [0,1]$.  Hence, since $a_0$, $a_1$ are polynomials, $a_0(x)^m = a_1(x)^m q(x) \in \rr[x]$.  But this is a contradiction, since the degree of the left-hand side is $m \cdot$ deg $a_0(x)$ while the degree of the right-hand side is $m \cdot$ deg $a_1(x) +$ deg $f(x)$, which implies that one is even and one is odd.

\end{proof}

\begin{example}  Suppose $S = \{ x - x^2, y^2 -x , y \}$, so that $K_S$ is the half-strip $[0,1] \times
\rr^+$ cut by the parabola $y^2 = x$.  Then, by the previous proposition, no finitely generated
preordering corresponding to $K_S$ is saturated.
\end{example}  

\begin{figure}[ht]
\centering
\includegraphics[scale=0.6]{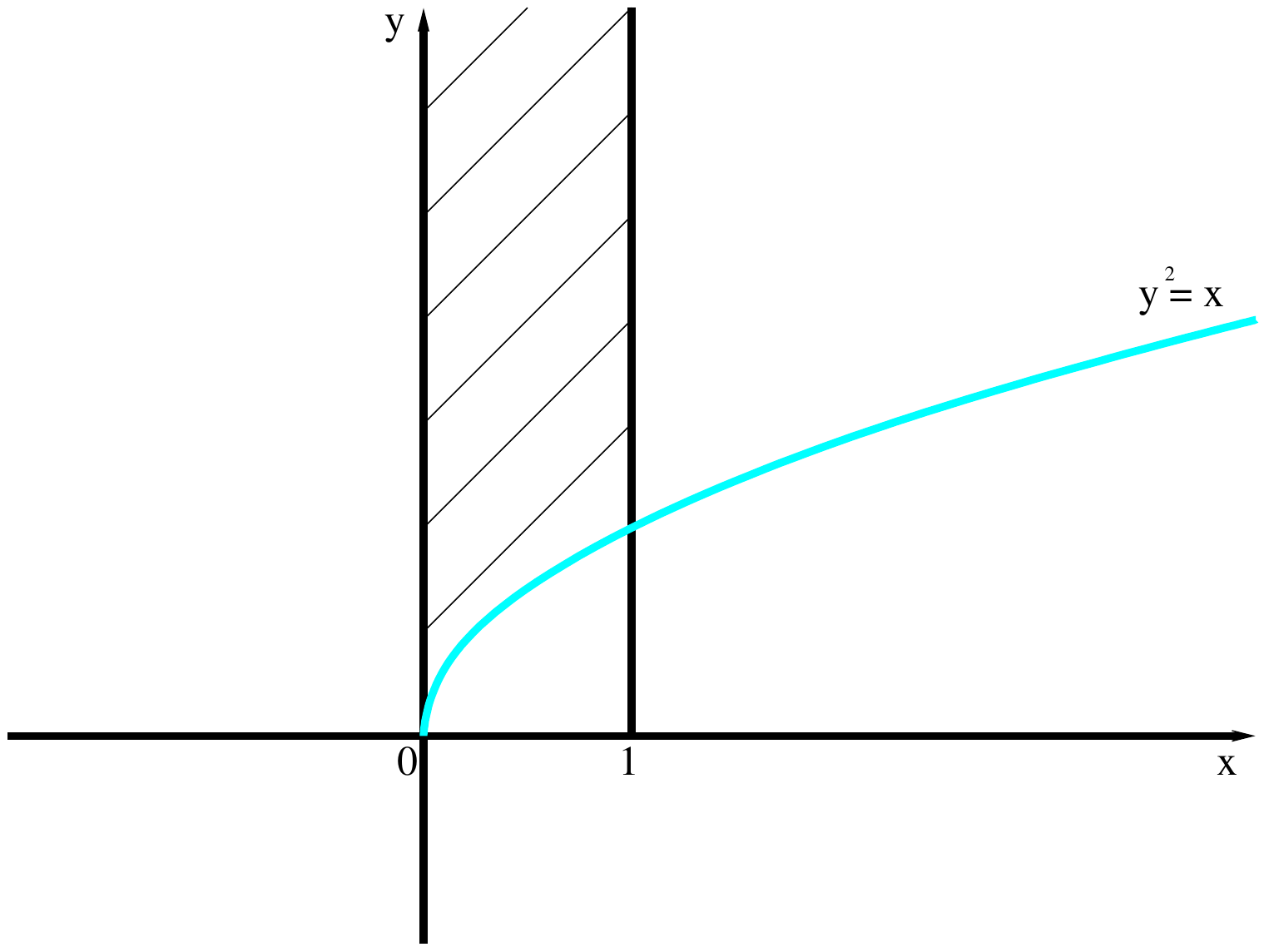}
\caption{Half-strip cut by $y^2 = x$}\label{halfstripxyneg}
\end{figure}

\bibliographystyle{amsplain}

\bibliography{StripRefs}

\end{document}